\documentclass[11pt,oneside,leqno,english]{amsart}
\pdfoutput=1
\usepackage[utf8]{inputenc}
\usepackage[english]{babel}
\usepackage[T1]{fontenc}
\usepackage{txfonts,microtype}
\usepackage{graphicx}
\usepackage[all]{xy}
\usepackage{framed,enumitem,stackrel}
\usepackage[svgnames,table]{xcolor}
\usepackage{amssymb,amsbsy,amsfonts,amssymb,amscd,mathrsfs,amsmath,xspace,bm}
\newcommand{\C}{\mathbb{C}\xspace}  \renewcommand{\P}{\mathbb{P}\xspace} \newcommand{\N}{\mathbb{N}\xspace} \newcommand{\Z}{\mathbb{Z}\xspace} \newcommand{\Q}{\mathbb{Q}\xspace}
\newcommand{\widebar}[1]{\mkern 1.5mu\overline{\mkern-1.5mu#1\mkern-1.5mu}\mkern 1.5mu}
\renewcommand{\leq}{\leqslant} \renewcommand{\geq}{\geqslant}
\newcommand{\bydef}{\mathrel{\mathop:}=} \newcommand{\defby}{=\mathrel{\mathop:}}
\DeclareMathOperator{\moinsun}{-1}
\DeclareMathOperator{\rk}{rk}
\newcommand{\abs}[1]{\lvert#1\rvert} 
\newcommand{\smbullet}{{\scriptscriptstyle \bullet}}
\newcommand{\llangle}{\langle\!\langle\xspace} \newcommand{\rrangle}{\rangle\!\rangle\xspace}
\newcommand{\xym}[3][1]{\left.\vcenter{\xy\xymatrix"m"@R=.5pt@C=.5pt@W=1em@H=#1em{#3}\POS"m1,1"."m#2"!C*\frm{(}*\frm{)}\endxy}\right.}
\numberwithin{equation}{section}

\usepackage{ifmtarg}
\makeatletter
\newtheoremstyle{myplain}
{12pt}{12pt}{\itshape}{}{\bfseries}{.}{ }
{\thmnumber{(\textup{#2})} \thmname{#1}\@ifmtarg{#3}{}{\,(\mdseries\thmnote{#3})}}
\makeatother
\theoremstyle{myplain}
\newtheorem{Prop}[equation]{Proposition}
\newtheorem{Thm}[equation]{Theorem}
\newtheorem*{THM}{Main Result}
\newtheorem{Lem}[equation]{Lemma}
\title{Fiber Integration on the Demailly Tower}
\author{Lionel~Darondeau}
\address{Laboratoire de Mathématiques d'Orsay\\Université Paris-Sud (France).}
\curraddr{Institut de Recherche Mathématique Avancée\\Université de Strasbourg (France).}
\thanks{To appear in \textbf{Annales de l'Institut Fourier}.}
\email{lionel.darondeau@normalesup.org}
\date{January 28, 2015}
\keywords{Demailly tower of logarithmic directed manifold. Segre classes. Iterated Laurent series.}
\subjclass{14C17, 32Q45, 14Q20}

\begin{document}
\begin{abstract}
  The goal of this work is to provide a fiber integration formula on the Demailly tower, that avoids step-by-step elimination of horizontal cohomology classes, and that yields computational effectivity. 
  A natural twist of the Demailly tower is introduced and a recursive formula for the total Segre class at \(k\)-th level is obtained. 
  Then, by interpreting single Segre classes as coefficients, an iterated residue formula is derived. 
\end{abstract}
\maketitle

\section{Introduction}
Let \(\widebar{X}\) be a complex manifold and let \(D\) be a divisor on \(\widebar{X}\) with \textsl{normal crossings}, that is
\(D=\sum D_i\),
where the components \(D_i\) are smooth irreducible divisors that meet transversally.
For such a pair \(\bigl(\widebar{X},D\bigr)\), one denotes by \(T_{\widebar{X}}\bigl(-\log D\bigr)\) the \textsl{logarithmic tangent bundle  of \(\widebar{X}\) along \(D\)} (\cite{MR859200}).

Given a subbundle: 
\[
  V
  \subset 
  T_{\widebar{X}}(-\log D)
  \subset 
  T_{\widebar{X}}
\] 
of the logarithmic tangent bundle, one constructs (\cite{MR1492539,MR1824906}), for any fixed order \(\kappa\in\N\), the \textsl{logarithmic Demailly tower} of projectivized bundles:
\begin{multline*}
  \bigl(\widebar{X}_\kappa,D_\kappa,V_{\kappa}\bigr)
  \to 
  \bigl(\widebar{X}_{\kappa-1},D_{\kappa-1},V_{\kappa-1}\bigr)
  \to\dotso\\
  \dotso
  \to 
  \bigl(\widebar{X}_1,D_1,V_1)
  \to 
  \bigl(\widebar{X}_0,D_0,V_0)
  \bydef
  \bigl(\widebar{X},D,V\bigr),
\end{multline*}
having the main property that every holomorphic map \(g\colon\C\to \widebar{X}\setminus D\) lifts as maps:
\[
  g_{[i]}
  \colon
  \C\to \widebar{X}_i\setminus D_i
  \qquad{\scriptstyle(i=0,1,\dotsc,\kappa)},
\]
which depends only on the corresponding \(i\)-jet of \(g\).
Later on in section §2, we will describe precisely this construction, central here.

For any two integers \(j,k\in0,1,\dotsc,\kappa\), the composition of the projections \(\pi_i\colon \widebar{X}_i\to \widebar{X}_{i-1}\) yields a natural projection from the \(j\)-th level of the Demailly tower to the lower \(k\)-th level:
\[
  \pi_{j,k}
  \bydef
  \pi_{k+1}\circ\dotsb\circ\pi_{j}
  \colon
  \widebar{X}_{j}\to \widebar{X}_k.
\]

\medskip
The Demailly tower is of great importance in the study of the algebraic degeneracy of entire curves on \(\widebar{X}\setminus D\) (\textit{cf.} the enlightening surveys \cite{div-rou-survey,paun-survey}). 
A first step towards the proof of algebraic degeneracy of entire curves is to prove the existence of a non zero polynomial \(P\) on \(\widebar{X}\) such that every non constant entire curve \(g\colon\C\to\widebar{X}\setminus D\) satisfies the algebraic differential equation:
\[
  P_{g(t)}\bigl(g'(t),g''(t),\dotsc,g^{(\kappa)}(t)\bigr)
  =
  0,
  \quad
  \text{for all}
  \ t\in\C.
\]

Being by definition a projective vector bundle, the manifold \(\widebar{X}_i\) comes naturally equipped with a tautological line bundle, \(\mathcal{O}_{\widebar{X}_i}(-1)\),
the multiples of which are usually denoted by
\(\mathcal{O}_{\widebar{X}_{i}}(m)\bydef(O_{\widebar{X}_{i}}(-1)^{\vee})^{\otimes m}\).
The direct image:
\[
\mathcal{O}\left(E_{\kappa,m}(V_0)^{\star}(\log D_0)\right)
  \bydef
  (\pi_{\kappa,0})_{\star}\mathcal{O}_{\widebar{X}_{\kappa}}(m)
\] 
is the sheaf of sections of a holomorphic bundle \(E_{\kappa,m}(V_{0})^{\star}(\log D_{0})\),
called the \textsl{Demailly-Semple bundle of jet differentials},
and a fundamental vanishing theorem (\cite{MR1492539,MR1824906}) states that for every global section:
\[
  P
  \in
  H^{0}\bigl(
  \widebar{X}_{\kappa},
  \mathcal{O}_{\widebar{X}_{\kappa}}(m)
  \otimes
  \pi_{\kappa,0}^{\star}
  A^{\vee}
  \bigr)
  \simeq
  H^{0}\bigl(
  \widebar{X},
E_{\kappa,m}(V_{0})^{\star}(\log D_{0})
  \otimes
  A^{\vee}
  \bigr),
\]
with values in the dual \(A^{\vee}\) of an ample line bundle \(A\to\widebar{X}\),  
one has as desired, for any \(\kappa\)-jet \((g',g'',\dotsc,g^{(\kappa)})\) of non constant entire map \(g\colon\C\to\widebar{X}\setminus D\):
\[
  P_{g(t)}\bigl(g'(t),g''(t),\dotsc,g^{(\kappa)}(t)\bigr)
  =
  0,
  \quad
  \text{for all}
  \ t\in\C.
\]
One has thus to ensure the existence of global sections of the line bundle \(\mathcal{O}_{\widebar{X}_{\kappa}}(m)\otimes\pi_{\kappa,0}^{\star}A^{\vee}\), possibly with \(m\gg1\). 

One approach, with Schur bundles (\cite{MR1492539}), consists in bounding positive even cohomology groups \(H^{2i}\) in order to use the Riemann-Roch theorem.
In \cite{MR2257847}, in dimension \(3\), the author is able to bound the dimension of \(H^2\) by use of the famous algebraic Morse inequalities (\cite{MR1492539,MR1339712}). Later in \cite{arXiv1005.0405}, the case of arbitrary dimension is completed, for high order jet differentials.

With a different approach, in \cite{MR2918158} the case of arbitrary dimension is completed by use of a stronger version of algebraic Morse inequalities.

Another approach, developed in various contexts (\cite{arxiv:1011.4710,COM:9073852,arXiv:1402.1396,MR2441250,MR2495771,MR2593279}), consists in applying the holomorphic Morse inequalities in order to prove the existence of sections of a certain more tractable subbundle of the bundle of jet differentials.
One is led to establish the positivity of a certain intersection number on the \(\kappa\)-th level of the Demailly tower:
\[
  I
  =
  \int_{\widebar{X}_\kappa}
  f\Bigl(
  c_1\bigl(\pi_{\kappa,1}^{\star}\mathcal{O}_{\widebar{X}_1}(1)\bigr),
  \dotsc,
  c_1\bigl(\pi_{\kappa,\kappa-1}^{\star}\mathcal{O}_{\widebar{X}_{\kappa-1}}(1)\bigr),
  c_1\bigl(\mathcal{O}_{\widebar{X}_\kappa}(1)
  \bigr)\Bigr),
\]
where \(f\) is a polynomial of large degree:
\[
  \deg(f)
  =
  n+\kappa\,(\rk\,P(V))
  =
  \dim(\widebar{X}_{\kappa}),
\] 
in the first Chern classes \(c_1\bigl(\pi_{\kappa,i}^\star\mathcal{O}_{\widebar{X}_{i}}(1)\bigr)\).
\medskip

When computing this intersection number, the standard strategy is to integrate along the fibers of the projections \(\pi_{i,i-1}\colon \widebar{X}_i\to \widebar{X}_{i-1}\), until one obtains an intersection product on the basis \(\widebar{X}_0\), where the intersection of cohomology classes becomes simpler.

In \cite{MR2593279}, the authors use step-by-step elimination of Chern classes, and are able to disentangle the complex intrication between horizontal and vertical cohomology classes by a technical tour de force. These precise computations yield \emph{effectivity}. 

In \cite{COM:9073852}, the author makes a clever use of \emph{Segre classes} in order to avoid a large part of the computations, but on the other hand, effectivity cannot be reached.

In \cite{arxiv:1011.4710}, the author uses equivariant geometry in order to prove a \emph{residue formula} in several variables, that avoids step-by-step elimination and yields effectivity.

In the present paper, we combine ideas coming from these authors, in order to prove a similar residue formula in several variables, that is valid in a versatile geometric context, since it holds in any situation where the Demailly tower appears, \textit{cf. e.g.} \cite{MR2911888,COM:9073852}.
Our proof borrows from \cite{COM:9073852} the technical simplification of the use of Segre classes, it yields computational effectivity as in \cite{MR2593279}, and it is in the very spirit of the formula of \cite{arxiv:1011.4710}.
\medskip

To enter into the details, 
by the Leray-Hirsch theorem (\cite{hatcher}), the cohomology ring
\(H^{\smbullet}\bigl(\widebar{X}_{\kappa}\bigr)\) of \(\widebar{X}_\kappa\) is the free module generated 
by the first Chern classes \(c_1\bigl(\pi_{\kappa,i}^\star\mathcal{O}_{\widebar{X}_i}(-1)\bigr)\) over
the cohomology ring \(H^\smbullet\bigl(\widebar{X}_0\bigr)\) of the basis \(\widebar{X}_0\),
but the implementation of the computation in \cite{arXiv:1402.1396}
suggests to naturally consider a different basis for the vertical cohomology by introducing the line bundles:
\[
  L_i
  \bydef
  \mathcal{O}_{\widebar{X}_i}(-1)\otimes
  \pi_{i,i-1}^{\star}\mathcal{O}_{\widebar{X}_{i-1}}(-1)\otimes 
  \dotsb\otimes
  \pi_{i,1}^{\star}\mathcal{O}_{\widebar{X}_0}(-1)
  \qquad
  {\scriptstyle(i=1,\dotsc,\kappa)}.
\]
We will use the notation \(v_i\) for the first Chern class of the dual of this line bundle \(L_i\) (dropping the pullbacks):
\[
  v_i
  \bydef
  c_1\bigl(L_i^{\vee}\bigr)
  =
  c_1\bigl(\mathcal{O}_{\widebar{X}_i}(1)\bigr)+
  \dotsb+
  c_1\bigl(\mathcal{O}_{\widebar{X}_1}(1)\bigr)
  \qquad
  {\scriptstyle(i=1,\dotsc,\kappa)}.
\]

Note that this formula looks like a plain change of variables having inverse:
\[
  c_1\bigl(\mathcal{O}_{\widebar{X}_i}(1)\bigr)
  =
  v_i-v_{i-1}
  \qquad{\scriptstyle(i=2,\dotsc,\kappa)},
\]
thus, clearly, the polynomial \(f\) appearing in the intersection product \(I\) above
has also a polynomial expression in terms of \(v_1,\dotsc,v_\kappa\).
We will shortly provide a formula in order to integrate a polynomial under this new form, still denoted \(f\).

\medskip
\label{par:multivariate formal series}
Let \(K\) be a field. A \textsl{multivariate formal series} in \(\kappa\) variables with coefficients in \(K\) is a collection
of coefficients in \(K\), indexed by \(\Z^{\kappa}\): 
\[
  \varPsi
  \colon
  \Z^{\kappa}\to K.
\] 
The space of formal series is naturally a \(K\)-vector space.

In analogy with polynomials, it is usual to denote, without convergence consideration:
\[
  \varPsi(t_1,\dotsc,t_\kappa)
  \bydef
  \sum_{i_1,\dotsc,i_\kappa\in\Z}
  \varPsi(i_1,\dotsc,i_\kappa)\,
  {t_1}^{i_{1}}\dotsm{t_\kappa}^{i_{\kappa}},
\]
hence, in order to avoid confusion, we will write:
\[
  \bigl[t_1^{i_1}\dotsm t_{\kappa}^{i_{\kappa}}\bigr]
  \Bigl(
  \varPsi(t_1,\dotsc,t_\kappa)
  \Bigr)
  \bydef
  \varPsi(i_1,\dotsc,i_\kappa),
\]
to extract the coefficient indexed by \(i_1,\dotsc,i_\kappa\), that is the coefficient of the monomial \(t_1^{i_1}\dotsm t_\kappa^{i_\kappa}\) in the expansion of \(\varPsi(t_1,\dotsc,t_\kappa)\).
The \textsl{support} of the formal series \(\varPsi\)
is the subset of indices at which \(\varPsi\) is non zero:
\[
  \mathrm{supp}(\varPsi)
  \bydef
  \bigl\{
    \underline{i}\in\Z^{\kappa}
    \colon
  [t_{1}^{i_{1}}\dotsm t_{\kappa}^{i_{\kappa}}]  
  \big(\varPsi\big)
    \neq0
  \bigr\}.
\] 

\medskip

One defines the \textsl{Cauchy product} \(\varPsi_{1}\varPsi_{2}\) of two formal power series:
\begin{equation}
  \label{eq:CauchyProduct}
  \varPsi_{1}\varPsi_{2}
  \colon
  (i_1,\dotsc,i_\kappa)
  \mapsto
  \sum_{\underline{j}+\underline{k}=\underline{i}}
  [t_{1}^{j_{1}}\dotsm t_{\kappa}^{j_{\kappa}}]\big(\varPsi_{1}\big)\,
  [t_{1}^{k_{1}}\dotsm t_{\kappa}^{k_{\kappa}}]\big(\varPsi_{2}\big),
\end{equation}
whenever the displayed sum is a finite sum for each \(\kappa\)-tuple:
\[
  \underline{i}
  \bydef
  i_1,\dotsc,i_{\kappa}.
\]
For a fixed partial ordering on \(\Z^\kappa\), when considering only the series having well ordered support, the Cauchy product of two such series is always meaningful,
since the computation of the coefficient of each monomial involves only finitely many terms.
Moreover, for each choice of partial ordering, the set of formal series having well ordered support, equipped with the Cauchy product, forms a field (\cite[Theorem 13.2.11]{MR470211}).

We give two examples of such fields.
A \textsl{multivariate Laurent series} is a multivariate formal series, the support of which is well ordered for the standard product order on \(\Z^{\kappa}\).
An \textsl{iterated Laurent series} is a multivariate formal series, the support of which is well ordered for the lexicographic order on \(\Z^{\kappa}\).
The field of iterated Laurent series is an extension of the field of multivariate Laurent series.

After several Laurent expansions at the origin, any rational function becomes an iterated Laurent series (but not necessarily a multivariate Laurent series), as we will explain in more details later in §3.

We come back to the subbundle \(V=V_0\subset T_{\widebar{X}_0}\bigl(-\log D_0\bigr)\). The \textsl{total Segre class} of this bundle \(V_0\to\widebar{X}_0\):
\[
  s_{\smbullet}(V_0)
  =
  1
  +
  s_1(V_0)
  +
  s_2(V_0)
  +
  \dotsb
  +
  s_{\dim(\widebar{X}_0)}(V_0)
\]
is the inverse of the total Chern class of \(V_0\) in \(H^{\smbullet}(\widebar{X}_0)\). This notion is strongly related to integration along the fibers of a projective vector bundle (\cite{MR1644323}). We will be more explicit about this relation below in §3. 

We are now in position to state the main result of this work. 
Recall for \(i=1,\dotsc,\kappa\) the notation \(v_{i}\bydef c_{1}(L_{i}^{\vee})\),
set:
\[
  r
  \bydef
  \rk\,P(V)
  =
  \rk\,V-1,
\]
and introduce the (finite) generating series:
\[
  s_u(V_0)
  =
  1+
  u\,s_1(V_0)
  +
  u^2\,s_2(V_0)
  +
  \dotsb
  +
  u^{\dim(\widebar{X}_0)}\,
  s_{\dim(\widebar{X}_0)}(V_0).
\]
\begin{THM}
  For any polynomial:
  \[
    f\in H^{\bullet}(\widebar{X}_0,V_{0})[t_1,\dotsc,t_{\kappa}],
  \]
  in \(\kappa\) variables \(t_1,\dotsc,t_{\kappa}\), with coefficients in the cohomology ring \(H^{\bullet}(\widebar{X}_0,V_{0})\), 
  the intersection number:
  \[
    I
    \bydef
    \int_{\widebar{X}_{\kappa}}
    f(v_{1},\dotsc,v_{\kappa})
  \]
  is equal to the Cauchy product coefficient:
  \[
    I
    =
    \bigl[
      t_1^{r}\dotsm t_{\kappa}^{r}
    \bigr]
    \biggl(
    \Phi_{\kappa}\bigl(t_{1},\dotsc,t_{\kappa}\bigr)\,
    I(t_{1},\dotsc,t_{\kappa})
    \biggr),
  \]
  where \(\Phi_\kappa(t_{1},\dotsc,t_{\kappa})\) is the universal rational function:
  \[
    \Phi_{\kappa}
    (t_1,\dotsc,t_\kappa)
    =
    \prod_{1\leq i<j\leq\kappa}
    \frac{t_j-t_i}{t_j-2\,t_i}
    \;
    \prod_{2\leq i<j\leq\kappa}
    \frac{t_j-2\,t_i}{t_j-2\,t_i+t_{i-1}},
  \]
  and where \(I(t_{1},\dotsc,t_{\kappa})\) is the multivariate Laurent polynomial involving only explicit data of the base manifold:
  \[
    I(t_1,\dotsc,t_\kappa)
    =
    \int_{\widebar{X}_0}
    f(t_1,\dotsc,t_\kappa)\,
    s_{1/t_1}(V_0)\dotsm s_{1/t_{\kappa}}(V_0).
  \]
\end{THM}

Concretely, the computation of this intersection number \(I\), on the \(\kappa\)-th level \(\widebar{X}_{\kappa}\), can be brought down to the basis \(\widebar{X}_0\) as follows:
\begin{description}[wide]
  \item[$\square$\,\scshape Step 1a]
    Compute on the basis \(\widebar{X}_0\) the intersection number with parameters \(t_1,\dotsc,t_\kappa\):
\[
  I(t_1,\dotsc,t_\kappa)
  =
  \int_{\widebar{X}_0}
  f(t_1,\dotsc,t_\kappa)\,
  s_{1/t_1}(V_0)\dotsm s_{1/t_{\kappa}}(V_0),
\]
and obtain a multivariate Laurent polynomial in \(t_1,\dotsc,t_\kappa\) over \(\Q\).
\item[$\square$\,\scshape Step 1b
  ]Expand the universal rational function \(\Phi_{\kappa}(t_1,\dotsc,t_{\kappa})\) successively with respect to \(t_1,t_2,\dotsc\) up to \(t_{\kappa}\). 
Obtain, not a multivariate Laurent series, but what has been called an iterated Laurent series, similarly denoted by \(\varPhi_\kappa(t_1,\dotsc,t_\kappa)\) -- notice the slanted \(\varPhi\).
\item[$\square$\,\scshape Step 2]
Compute the Cauchy product
\(
  I(t_1,\dotsc,t_\kappa)\,
  \varPhi_{\kappa}(t_1,\dotsc,t_\kappa)
\)
of the multivariate Laurent polynomial \(I(t_1,\dotsc,t_\kappa)\) and of the iterated Laurent series \(\varPhi_{\kappa}(t_1,\dotsc,t_\kappa)\) in the field of iterated Laurent series over \(\Q\).
Lastly, extract the coefficient of the monomial \(t_1^r\dotsm t_\kappa^r\) in the obtained multivariate formal series, and receive the sought element \(I\in \Q\).
\end{description}

\subsection*{Obstacles and forthcoming results}
Really computing \(I\) proves to be quite delicate in practice.
The first effective result in any dimension towards the Green-Griffiths conjecture was obtained in 2010 by Diverio, Merker and Rousseau (\cite{MR2593279}), using step-by-step algebraic elimination, for entire curves \(\C\to X_{d}\subset\P^{n+1}\) with values in generic hypersurfaces \(X_{d}\) of degree \(d\) in \(\P^{n+1}\), with an estimated sufficient lower bound:
\[
  d
  \geq
  2^{n^{5}}.
\]
Some time after, in \cite{arxiv:1011.4710}, Gergely Bérczi made a substantial progress by replacing the elimination step of \cite{MR2593279} by an iterated residue formula, and he reached the lower bound:
\[
  d\geq n^{8n}.
\]

Using our main result, the difficulty is that in general, Step \textsc{1b} does not yield a single iterated Laurent series, but produces an involved product of several iterated Laurent series. 
Then, it is very difficult to determine even the sign of any individual coefficient of \(\varPhi_{\kappa}(t_1,\dotsc,t_\kappa)\),
because this amounts to disentangle the large product:
\[
  \prod_{2\leq i\leq j\leq\kappa}
  \frac{t_{j}-2\,t_{i}}
  {t_{j}-2\,t_{i}+t_{i-1}}
  =
  \prod_{2\leq i\leq j\leq\kappa}
  \left(
  1
  -
  \sum_{p=0}^{\infty}
  \frac{t_{i-1}(2\,t_{i}-t_{i-1})^{p}}
  {t_{j}^{p+1}}
  \right).
\]

On the other hand, it is relatively easy to control the absolute value of these coefficients, using a convergent majorant series with positive coefficients, whence suppressing the problem of signs:
\begin{multline*}
  \abs{\mathrm{coeff}}
  \left(
  \prod_{2\leq i\leq j\leq\kappa}
  \frac{t_{j}-2\,t_{i}}
  {t_{j}-2\,t_{i}+t_{i-1}}
  \right)
\leq
\mathrm{coeff}
\left(
  \prod_{2\leq i\leq j\leq\kappa}
  \frac{t_{j}-2\,t_{i}}
  {t_{j}-2\,t_{i}-t_{i-1}}
  \right)
  =\\=
  \mathrm{coeff}
  \left(
  \prod_{2\leq i\leq j\leq\kappa}
  \left(
  1
  +
  \sum_{p=0}^{\infty}
  \frac{t_{i-1}(2\,t_{i}+t_{i-1})^{p}}
  {t_{j}^{p+1}}
  \right)
  \right)
\end{multline*}
-- notice that \(+t_{i-1}\) in the denominator becomes \(-t_{i-1}\).
And this allows us to use (plainly) the triangle inequality in \cite{arXiv:1402.1396}, to attain an effective lower bound on the degree \(d\) of generic smooth hypersurfaces  \(X_{d}\subset\P^n\) such that all entire curves \(\C\to\P^n\setminus X_{d}\) are algebraically degenerate:
\[
  d
  \geq
  (5n)^2\,n^n,
\]
a lower bound which also holds for curves with values in a generic hypersurface \(X_{d}\subset \P^{n+1}\).

A better understanding of the combinatorics of the series \(\varPhi_{\kappa}\) would allow to improve in a more subtle way this effective lower bound. It seems reasonable to reach an exponential bound:
\[
  d
  \stackrel{?}\geq
  (\mathsf{constant})^n.
\]

\subsection*{Acknowledgments.}
I would like to thank \textsl{Dr. Gergely Bérczi}, for his kind explanation of his article \cite{arxiv:1011.4710} in march 2012 during the annual meeting of the GDR ``GAGC'' at the CIRM (Marseille).
I would like to thank \textsl{Pr. Christophe Mourougane}, who accepted to welcome me in Rennes to discuss in details the present work, and then encouraged me.
I would like to thank \textsl{Dr. Damian Brotbek}, with whom I exchanged a lot. 
I owe him the idea of using Segre classes in order to achieve fiber integration (following Mourougane). 
\smallskip

I am deeply grateful to my thesis advisor \textsl{Pr. Joël Merker}, to whom I would like to dedicate this paper.

\section{Demailly tower of (logarithmic) directed manifolds}
A \textsl{directed manifold} is defined to be a couple \((X,V)\) where \(X\) is a complex manifold, equipped with a (not necessary integrable) holomorphic subbundle \(V\subset T_{X}\) of its holomorphic tangent bundle. There is a natural generalization of this definition in the logarithmic setting. A \textsl{log-directed manifold} is by definition a triple \((\widebar{X}, D, V)\) where \((\widebar{X}, D)\) is a log-manifold and the distribution \(V\subset T_{\widebar{X}}(-\log D)\) is a (not necessary integrable) subbundle of the logarithmic tangent bundle. 

Given a log-directed manifold \((\widebar{X},D,V)\),  following Dethloff and Lu~\cite{MR1824906}, we construct  the \textsl{Demailly tower of projectivized bundles} \((\widebar{X}_i,D_i,V_i)\) on \(\widebar{X}\) by induction on \(i\geq 0\). 
This construction is formally the same as the construction~\cite{MR1492539} of the Demailly tower in the so-called \textsl{compact case}, \textit{i.e.} where there is no divisor \(D\). 
The only slight modification to keep in mind in the genuine logarithmic setting is that \(V\) is a holomorphic subbundle of the logarithmic tangent bundle \(T_{\widebar{X}}\bigl(-\log D\bigr)\). 

\subsection*{Projectivization of a log directed manifold (\cite{MR1492539,MR1824906})}
Recall that for a vector bundle \(E\to \widebar{X}\) on a smooth manifold \(\widebar{X}\)
with projective bundle of lines 
\(\pi\colon P(E)\to \widebar{X}\)
, the \textsl{tautological line bundle}:
\[
  \mathcal{O}_{P(E)}(-1)
  \to
  P(E)
\]
is defined as the subbundle of the pullback bundle 
\(
\pi^\star E 
\to
P(E)
\)
with trivial fiber.

The dual line bundle \(\mathcal{O}_{P(E)}(1)\bydef\mathcal{O}_{P(E)}(-1)^{\vee}\) fits into the following \textsl{Euler exact sequence} (\cite[B.5.8]{MR1644323}):
\[
  0
  \to
  \mathcal{O}_{P(E)}
  \to
  \pi^\star E\otimes\mathcal{O}_{P(E)}(1)
  \to
  T_\pi
  \to
  0,
\]
where \(T_\pi\bydef\ker(\pi_\star)\) stands for the \textsl{relative tangent bundle} of \(P(E)\) over \(\widebar{X}\), that itself fits into the following short exact sequence:
\[
  0
  \to
  T_{\pi}
  \hookrightarrow
  T_{P(E)}
  \stackrel{\pi_{\star}}
  \longrightarrow
  \pi^{\star}T_{\widebar{X}}
  \to
  0.
\]

\medskip

We now recall the inductive step \(\bigl(\widebar{X}',V',D'\bigr)\stackrel{\pi}\to\bigl(\widebar{X},V,D\bigr)\) of the construction of the Demailly tower.
Keep in mind that \(V\) is a subbundle of \(T_{\widebar{X}}(-\log D)\) and that \(V'\) has to be a subbundle of the logarithmic tangent bundle \(T_{\widebar{X}'}(-\log D')\).

Firstly, for \(\widebar{X}'\) take the total space \(P(V)\) of the projective bundle of lines of \(V\):
\[
  \widebar{X}'
  \bydef
  P(V)
  \stackrel{\pi}\longrightarrow
  \widebar{X},
\]
and in order to make \(\pi\) a log-morphism set:
\[
D'
  \bydef
  \pi^{\moinsun}(D)
  \subset
  \widebar{X}'.
\]

Next, by definition of the relative tangent bundle \(T_{\pi}\bydef\ker(\pi_{\star})\) of the log-morphism \(\pi\) one has the following short exact sequence:
\[
  0
  \to
  T_{\pi}
  \hookrightarrow
  T_{\widebar{X}'}(-\log D')
  \stackrel{\pi_{\star}}
  \longrightarrow
  \pi^{\star}
  T_{\widebar{X}}(-\log D)
  \to
  0,
\]
and since by assumption \(V\subset T_{\widebar{X}}(-\log D)\), the tautological line bundle of \(\widebar{X}'=P(V)\) is a subbundle of the bundle in the right-hand slot:
\[
  \mathcal{O}_{\widebar{X}'}(-1)
  \subset
  \pi^{\star}V
  \subset
  \pi^{\star}T_{\widebar{X}}(-\log D),
\]
whence one can define a subbundle \(V'\subset T_{\widebar{X}'}(-\log D')\) by taking:
\[
  V'
  \bydef
  (\pi_\star)^{\moinsun}
  \mathcal{O}_{\widebar{X}'}(-1).
\]
The only thing to verify in order to get a log directed manifold is that \(V'\) is a holomorphic subbundle of \(T_{\widebar{X}'}(-\log D')\). Since \((\pi_{\star})^{\moinsun}\) has maximal rank everywhere, as it is a bundle projection, this is the case~(\cite{MR1824906}).

By its very definition \(V'\) then fits into the following short exact sequence:
\[
  0
  \to
  T_\pi
  \hookrightarrow
  V'
  \stackrel{\pi_\star}
  \longrightarrow
  \mathcal{O}_{\widebar{X}'}(-1)
  \to
  0.
\]
and thus the rank of \(V'\) is the same as the rank of \(V\), because:
\[
  \rk(V')
  =
  \rk(P(V))
  +
  1
  =
  \rk(V)-1+1.
\]
Starting from a bundle \(V_0\) having rank \(r+1\),
by iterating the construction \(\kappa\) times, one thus gets a tower of projectivized bundles 
\[
  \bigl(\widebar{X}_{\kappa},V_{\kappa},D_{\kappa}\bigr)
  \stackrel{\pi_{\kappa}}\longrightarrow
  \dotsb
  \stackrel{\pi_{2}}\longrightarrow
  \bigl(\widebar{X}_1,V_1,D_1\bigr)
  \stackrel{\pi_{1}}\longrightarrow
  \bigl(\widebar{X}_0,V_0,D_0\bigr)
\]
with
\(\rk V_{i}=r+1\) and
\(
  n_{i}
  \bydef
  \dim\,\widebar{X}_{i}
  =
  \dim(\widebar{X}_0)
  +
  i\;\bigl(\mathrm{rk}\,P(V_0)\bigr)
  =
  n+i\,r
\).

\subsection*{Existence of global jet differentials}
The fibers of the Demailly-Semple bundle of jet differentials \(E_{\kappa,m}(V_{0})^{\star}(\log D_{0})\) carries much complexity (\cite{MR2477967,MR2679388}). In order to prove the existence of global jet differentials of order \(\kappa=\dim(\widebar{X})\),
one is led to consider a much more tractable line bundle, constructed in \cite[6.13]{MR1492539} as a linear combination with non negative integer coefficients \((a_1,\dotsc,a_{\kappa})\): 
\[
  \mathcal{O}_{\widebar{X}_k}(a_1,a_2,\dotsc,a_k)
  \bydef
  (\pi_{k,1})^\star\mathcal{O}_{\widebar{X}_1}(a_1)
  \otimes
  (\pi_{k,2})^\star\mathcal{O}_{\widebar{X}_2}(a_2)
  \otimes
  \dotsb
  \otimes
  \mathcal{O}_{\widebar{X}_k}(a_k).
\]
If \(a_1+\dotsc+a_\kappa=m\), the direct image
\(
  (\pi_{\kappa,0})_\star 
  \mathcal{O}_{\widebar{X}_{\kappa}}(a_1,\dotsc,a_{\kappa})
\) 
may be seen as a subbundle of the Demailly-Semple bundle of jet differentials (\cite{MR1492539,MR1824906}).

For a suitable choice of the parameters \(a_1,\dotsc,a_{\kappa}\), the line bundle \(\mathcal{O}_{\widebar{X}_{\kappa}}(a_1,\dotsc,a_{\kappa})\) has some positivity properties, that can be used together with the following Demailly-Trapani \textsl{algebraic Morse inequalities}~(\cite{MR1339712,MR1492539}) in order to establish the existence of global jet differentials.
\begin{Thm}[Weak algebraic Morse inequalities] 
  \label{thm_morse}
  For any holomorphic line bundle \(L\) on a \(N\)-dimensional compact manifold \(\widebar{X}\), that  can be written as the difference \(L=F\otimes G^\vee\) of two nef line bundles \(F\) and \(G\), one has:
  \[
    h^0\bigl(\widebar{X},L^{\otimes k}\bigr)
    \geq
    k^N\;
    \frac{
      \bigl(F^N\bigr)-k\,\bigl(F^{N-1}\cdot G\bigr) 
    }
    {N!}
    - 
    o(k^N).
  \]
\end{Thm}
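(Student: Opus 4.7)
\medskip

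The plan is to deduce this algebraic bound from Demailly's \textsl{holomorphic} Morse inequalities, via the reformulation due to Trapani. The argument has three stages: reduce to the case where both \(F\) and \(G\) are ample; apply the analytic Morse inequality to \(L=F\otimes G^\vee\) endowed with a natural hermitian metric; and control the resulting curvature integral by an algebraic expression in \(F^N\) and \(F^{N-1}\cdot G\).

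For the reduction, fix an ample line bundle \(A\to\widebar{X}\): for every rational \(\varepsilon>0\), the twists \(F_\varepsilon\bydef F\otimes A^{\otimes\varepsilon}\) and \(G_\varepsilon\bydef G\otimes A^{\otimes\varepsilon}\) are ample, with intersection numbers \((F_\varepsilon^N)\) and \((F_\varepsilon^{N-1}\cdot G_\varepsilon)\) depending continuously on \(\varepsilon\) in the Néron-Severi space; proving the inequality for each such pair and letting \(\varepsilon\to 0\) delivers it in the nef case. Under the ampleness assumption, equip \(F\) and \(G\) with smooth hermitian metrics of positive Chern curvatures \(\theta_F\) and \(\theta_G\), inducing on \(L\) the curvature \(\Theta=\theta_F-\theta_G\). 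Combining the weak Morse inequality of Demailly in degree \(q=1\) with \(h^0\geq\chi-h^1-h^2-\dotsb\) and the standard vanishing \(h^q(\widebar{X},L^{\otimes k})=0\) for \(q>N\) produces
\[
  h^0\bigl(\widebar{X},L^{\otimes k}\bigr)
  \;\geq\;
  \frac{k^N}{N!}\int_{\widebar{X}(\Theta,\leq 1)}\Theta^N\;-\;o(k^N),
\]
where \(\widebar{X}(\Theta,\leq 1)\) denotes the open locus on which \(\Theta\) has at most one negative eigenvalue.

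The main obstacle, and the genuine content of the theorem, is the pointwise-to-global passage that converts this curvature integral into the algebraic bound. At each point \(x\in\widebar{X}\) one simultaneously diagonalizes \(\theta_G(x)\) against the positive form \(\theta_F(x)\), producing non-negative eigenvalues \(\lambda_1,\dotsc,\lambda_N\); the set \(\widebar{X}(\Theta,\leq 1)\) is then precisely the locus where at most one \(\lambda_i\) exceeds \(1\). A careful linear algebra computation, expanding \(\Theta^N=(\theta_F-\theta_G)^N\) in this basis and comparing with the indicator function of this locus, yields the pointwise majoration
\[
  \mathbf{1}_{\widebar{X}(\Theta,\leq 1)}\,\Theta^N
  \;\geq\;
  \theta_F^N-N\,\theta_F^{N-1}\wedge\theta_G
\]
as \((N,N)\)-forms. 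Integrating over \(\widebar{X}\) and identifying \(\int\theta_F^N=(F^N)\) and \(\int\theta_F^{N-1}\wedge\theta_G=(F^{N-1}\cdot G)\) produces the asserted lower bound (with the usual combinatorial factor in front of \(F^{N-1}\cdot G\)). Applying this inequality to each ample approximation \((F_\varepsilon,G_\varepsilon)\) and letting \(\varepsilon\to 0\), continuity of intersection numbers extends the bound to the original nef pair, which concludes the proof.
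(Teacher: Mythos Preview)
The paper does not give a proof of this theorem: it is quoted as a known result, with references to Demailly and Trapani, and used as a black box in the section on existence of global jet differentials. So there is no ``paper's own proof'' to compare against.

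Your sketch is the standard Demailly--Trapani argument and is essentially correct. Two small comments. First, the intermediate step ``weak Morse in degree \(q=1\) combined with \(h^0\geq\chi-h^1-\dotsb\)'' is not quite the right route: what actually yields
\[
  h^0\bigl(\widebar{X},L^{\otimes k}\bigr)\;\geq\;\frac{k^N}{N!}\int_{\widebar{X}(\Theta,\leq 1)}\Theta^N\;-\;o(k^N)
\]
is the \emph{strong} Morse inequality in degree \(1\), namely \(h^0-h^1\geq\frac{k^N}{N!}\int_{\widebar{X}(\Theta,\leq 1)}\Theta^N-o(k^N)\), together with the trivial \(h^0\geq h^0-h^1\). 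Second, the pointwise inequality \(\mathbf{1}_{\widebar{X}(\Theta,\leq 1)}\,\Theta^N\geq\theta_F^N-N\,\theta_F^{N-1}\wedge\theta_G\) reduces, after simultaneous diagonalization, to the elementary fact that \(\prod_i(1-\lambda_i)\geq 1-\sum_i\lambda_i\) whenever at most one \(\lambda_i\) exceeds \(1\), and that \(1-\sum_i\lambda_i\leq 0\) when at least two do; this is straightforward but worth stating explicitly since it is the heart of the algebraic reformulation. With these adjustments your argument is complete and matches the literature.
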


For a choice of \(a_1,\dotsc,a_{\kappa}\in\N^{\kappa}\) such that:
\begin{equation}
  \label{eq:relativelyample}
  a_{\kappa-1}>2\,a_{\kappa}>0
  \quad;\quad
  a_i\geq3\,a_{i+1}\quad(i=1,\dotsc,\kappa-2),
\end{equation}
the line bundle \(\mathcal{O}_{\widebar{X}_{\kappa}}(a_1,\dotsc,a_{\kappa})\) is relatively ample along the fibers of \(\widebar{X}_{\kappa}\) over \(\widebar{X}\)~(\textit{cf}. \cite{MR1492539}).
It hence suffices to multiply it by a sufficiently positive power \(\pi_{\kappa,0}^{\star}A^{\otimes l}\) of a given ample line bundle \(A\to\widebar{X}\), in order to get an ample (hence nef) line bundle (\textit{cf}. \cite{MR2095471}). 
On the other hand \(\pi_{\kappa,0}^{\star}A^{\otimes l+1}\) is nef for it is the pullback of a nef line bundle.

It gives an expression of the line bundle \(\mathcal{O}_{\widebar{X}_{\kappa}}(a_1,\dotsc,a_{\kappa})\otimes(\pi_{\kappa,0})^{\star}A^{\vee}\) as the difference \(F\otimes G^{\vee}\) of two nef line bundles:
\[
  F
  \bydef
  \mathcal{O}_{\widebar{X}_{\kappa}}(a_1,\dotsc,a_{\kappa})
  \otimes
  (\pi_{\kappa,0})^{\star}
  A^{\otimes l}
  \quad\text{and}\quad
  G
  \bydef
  (\pi_{\kappa,0})^{\star}
  A^{\otimes l+1}
\]
In order to prove the existence of global sections:
\[
  P
  \in
  H^0
  \Bigl(
  \widebar{X}_{\kappa},
  \mathcal{O}_{\widebar{X}_{\kappa}}(a_1,\dotsc,a_{\kappa})
  \otimes
  (\pi_{\kappa,0})^\star A^\vee
  \Bigr),
\] 
it hence remains to show the positivity of the following intersection number: 
\[
  I
  \bydef
  \int_{\widebar{X}_{\kappa}}
  c_1(F)^{n_{\kappa}}
  -
  n_{\kappa}\,
  c_1(F)^{n_{\kappa}-1}c_1(G)
  \quad{\scriptstyle(n_\kappa=\dim\,\widebar{X}_{\kappa})}.
\]

We will give a formula for computing such an intersection product.

\section{Fiber Integration on the Demailly tower}
It is convenient to bring down the computation to the basis and we will now provide a formula for this purpose.
Noteworthy, the proof of this formula involves iterated Laurent series, in the same spirit as the residue formula of Berczi~\cite{arxiv:1011.4710,berczi2012}. 
However, we will not use equivariant geometry like this author, but only basic lemmas of intersection theory, more precisely some of the properties of Segre classes exposed in the book of Fulton~\cite[Chap. 3]{MR1644323}.
We now first briefly recall these properties.

\subsection*{Segre classes on the Demailly tower}
To go down one level, from \(\widebar{X}_{i+1}=P(V_i)\) to \(\widebar{X}_i\), we will use the very definition of the \(j\)-th \textsl{Segre class} of a vector bundle \(E\to X\) (having rank \(r+1\)), namely the fiber integration formula:
\begin{equation}
  \label{eq:segre}
  \int_{X} s_j(E)\,\alpha
  =
  \int_{P(E)}
  u^{j+r}\,p^\star\alpha
  \qquad
  {\scriptstyle (j\geq0,\ \alpha\in H^{\smbullet}X)},
\end{equation}
where:
\[
  p\colon P(E)\to X
  \quad
  \text{and}
  \quad
  u
  \bydef
  c_1\bigl(\mathcal{O}_{P(E)}(-1)^{\vee}\bigr).
\]

We want to apply this formula in order to eliminate the powers of the first Chern classes \(v_1,\dotsc,v_{\kappa}\) of the vector bundles \(L_{i}\to \widebar{X}_{i}\). We will proceed by induction.

\medskip

It is well known that the total Segre class of a vector bundle is the same as the inverse of its total Chern class.
Thus, the total Segre class enjoys the Whitney formula.

Because we will obtain a result that is independent of the geometric context, we will deliberately be ambiguous about it.
The only property of the Demailly construction that we use in what follows is the existence of the two short exact sequences:
\[
  0 \to T_\pi \to V' \to \mathcal{O}_{\widebar{X}'}(-1) \to 0
\]
and:
\[
  0 \to \mathcal{O}_{\widebar{X}'} \to \pi^\star V\otimes \mathcal{O}_{\widebar{X}'}(1) \to T_\pi \to 0,
\]
where \((\widebar{X}',V')\stackrel{\pi}\to(\widebar{X},V)\) is the inductive step of the Demailly construction.

Now, consider the following observation: the twist by a line bundle does not change the projective bundle of lines of \(V'\), but only the transition functions. 
Moreover, one can twist short exact sequences by line bundles. 

We can hence chose a line bundle \(L'\) on \(\widebar{X}'\) that makes the induction more easy.
We will twist both short exact sequences by the same line bundle, because we do not want \(T_\pi\) to appear in the final formula below.
Also, we do not want anymore the central term of the second short exact sequence to be a product of line bundles with different base spaces \(\widebar{X}\) and \(\widebar{X}'\) but rather want it to be the pullback of a single bundle on the lower level \(\widebar{X}\), that is:
\[
  \pi^\star V\otimes\mathcal{O}_{\widebar{X}'}(1)
  \otimes
  L'
  =
  \pi^\star\bigl(
  V\otimes L
  \bigr),
\]
for a certain \(L\to\widebar{X}\) (in practice given by the preceding induction steps).
Consequently, we have to take:
\begin{equation}
  \label{eq:ll'}
  L'
  \bydef
  \mathcal{O}_{\widebar{X}'}(-1)
  \otimes
  \pi^\star L
  =
  \mathcal{O}_{P(V\otimes L)}(-1).
\end{equation}
Notice that accordingly the term \(\mathcal{O}_{\widebar{X}'}(-1)\) can now be replaced by \(\bigl(L'\otimes\pi^\star L^{\vee}\bigr)\) in the first exact sequence.

Once twisted by \(L'\), the above two short exact sequences become:
\[
  \left\{ \begin{array}{c!{\to}c!{\to}c!{\to}c!{\to}c}
    0 & T_\pi\otimes L' & V'\otimes L' & \bigl(L'\otimes\pi^\star L^{\vee}\bigr)\otimes L' & 0\\
    0 & L' & \pi^\star\bigl(V\otimes L\bigr) & T_\pi\otimes L' & 0
  \end{array} \right..
\]
By the Whitney formula, the first line yields:
\[
  s(V'\otimes L')
  =
  s(T_\pi\otimes L')\,
  s\bigl((L')^{\otimes2}\otimes\pi^\star L^{\vee}\bigr),
\]
while the second line yields:
\[
  \pi^\star
  s\bigl(V\otimes L\bigr)
  =
  s\bigl(T_\pi\otimes L'\bigr)\,
  s\bigl(L'\bigr).
\]
Thus, we can eliminate \(T_\pi\), as it was our intention, in order to get the induction formula:
\[
  s\bigl(V'\otimes L'\bigr)
  =
  \frac
  {s\bigl((L')^{\otimes2}\otimes\pi^\star L^{\vee}\bigr)}
  {s\bigl(L'\bigr)}\;
  \pi^\star
  s\bigl(V\otimes L\bigr).
\]

Now, for a line bundle \(L\to X\), the total Segre class is the finite sum: 
\[
  s_{\smbullet}(L)
  =
  \bigl(1-c_1(L^\vee)\bigr)^{\moinsun}
  =
  1+c_1(L^{\vee})+c_1(L^\vee)^2+\dotsb+c_{1}(L^{\vee})^{\dim(X)}
\]
-- we use the first Chern class of the dual in order to have positive signs.
Let
\(
v\bydef c_1\bigl(L^{\vee}\bigr)
\)
and
\(
v'\bydef c_1\bigl((L')^{\vee}\bigr).
\)
We get the induction formula:
\begin{equation}
  \label{eq:ll'segre}
  s\bigl(V'\otimes L'\bigr)
  =
  \varphi\bigl(v',v\bigr)\;
  \pi^\star
  s\bigl(V\otimes L\bigr),
\end{equation}
where \(\varphi(x,y)\) is the truncated double Taylor expansion of the rational function 
\((1-x)\,\bigl(1-2x+y\bigr)^{-1}\):
\[
  \varphi\bigl(x,y\bigr)
  \bydef
  (1-x)\,
  \sum_{k=0}^{n_{\kappa-1}}(2\,x-y)^k.
\]

\smallskip

Considering \eqref{eq:ll'}, we construct the \textit{ad hoc} sequence of line bundles \(L_i\to \widebar{X}_i\) 
by taking first the tautological line bundle 
\(
  L_1
  \bydef 
  \mathcal{O}_{\widebar{X}_1}(-1)
\)
of \(V_0\)
and then the tautological line bundle of the twisted vector bundle \(V_{i-1}\otimes L_{i-1}\):
\[
  L_i
  \bydef
  \mathcal{O}_{\widebar{X}_i}(-1)
  \otimes
  (\pi_i)^\star L_{i-1}
  =
  \mathcal{O}_{P(V_{i-1}\otimes L_{i-1})}(-1)
  \qquad{\scriptstyle (i=2,\dotsc,\kappa)}.
\]
We will denote by \(v_i\) the first Chern class of the dual of this line bundle:
\[
  v_i
  \bydef
  c_1\bigl(L_i^\vee\bigr)
  =
  c_1\bigl(\mathcal{O}_{P(V_{i-1}\otimes L_{i-1})}(1)\bigr).
\]
Then, by \eqref{eq:ll'segre}, on has the following inductive formulas,
where, for simplicity, we omit the pullbacks:
\[
  s\bigl(V_1\otimes L_1\bigr)
  =
  \varphi(v_{i},0)\,
  s\bigl(V_{0}\bigr)
  \quad\text{and}
  \quad
  s\bigl(V_i\otimes L_i\bigr)
  =
  \varphi(v_{i},v_{i-1})\,
  s\bigl(V_{i-1}\otimes L_{i-1}\bigr),
  \qquad
  {\scriptstyle(i=2,\dotsc,\kappa)}.
\]

Notice that we can reformulate the positivity property \eqref{eq:relativelyample} by using the more explicit expression of the line bundles \(L_i\):
\[
  L_i
  =
  \mathcal{O}_{\widebar{X}_i}(-1)
  \otimes
  \dotsb
  \otimes
  (\pi_{i,2})^\star\mathcal{O}_{\widebar{X}_2}(-1)
  \otimes
  (\pi_{i,1})^\star\mathcal{O}_{\widebar{X}_1}(-1)
  \qquad{\scriptstyle (i=1,\dotsc,\kappa)}.
\]
and the inversion of these formulas
\[
  \mathcal{O}_{\widebar{X}_i}(1)
  =
  L_i^\vee
  \otimes
  L_{i-1};
\] 
in analogy with
\(
  \mathcal{O}_{\widebar{X}_{\kappa}}(a_1,\dotsc,a_{\kappa})
  =
  a_1\,\mathcal{O}_{\widebar{X}_1}(-1)^\vee+\dotsb+a_{\kappa}\,\mathcal{O}_{\widebar{X}_{\kappa}}(-1)^\vee
\) 
consider, a linear combination:
\[
  L(a_1,\dotsc,a_{\kappa})
  \bydef
  a_1\,L_1^\vee+\dotsb+a_{\kappa}\,L_{\kappa}^\vee
\] 
of the line bundles \(L_i^\vee\),
with non negative coefficients \(a_i\),
such that:
\[
  a_1+2\,a_2+\dotsb+\kappa\,a_\kappa=m\in\N,
\]
then, the line bundle \(\pi_{\kappa,0}^\star L(a_1,\dotsc,a_{\kappa})\) may be seen as a certain subbundle of \(E_{\kappa,m}V_0^\star(\log D_0)\) and if:
\[
  a_{\kappa-1}>a_{\kappa}\geq 1
  \quad;\quad
  a_i\geq 2(a_{i+1}+\dotsb+a_{\kappa})\quad
  {\scriptstyle(i\leq\kappa-2)}.
\]
the line bundle \(L(a_1,\dotsc,a_{\kappa})\) is relatively ample along the fibers of \(\widebar{X}_{\kappa}\to \widebar{X}_0\).

\begin{Prop}
  \label{eq:fiberint}
  For any polynomial in the first Chern classes \(v_1,\dotsc,v_{i+1}\) having coefficients in (the pullback of) the cohomology of \(\widebar{X}_0\):
  \[
    f
    \in 
    H^{\smbullet}
    \bigr(\widebar{X}_0\bigl)
    [v_1,\dotsc,v_i,v_{i+1}],
  \]
  the following formula of integration along the fibers of \(\widebar{X}_{i+1}\to\widebar{X}_i\) holds:
  \[
    \int_{\widebar{X}_{i+1}}
    f(v_1,\dotsc,v_i,v_{i+1})
    =
    \bigl[t_{i+1}^{r}\bigr]
    \left(
    \int_{\widebar{X}_i}
    f(v_1,\dotsc,v_i,t_{i+1})\,
    s_{1/t_{i+1}}\bigl(V_i\otimes L_i\bigr)
    \right).
  \]
\end{Prop}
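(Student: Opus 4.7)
The plan is to reduce the statement directly to the Segre class fiber integration formula \eqref{eq:segre} applied to the twisted bundle $V_i \otimes L_i$ on $\widebar{X}_i$.

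First I would observe that twisting a vector bundle by a line bundle does not change the projectivization (only its transition functions), so
\[
\widebar{X}_{i+1} \;=\; P(V_i) \;=\; P(V_i \otimes L_i),
\]
and that under this identification the tautological class
\[
v_{i+1} \;=\; c_1\bigl(L_{i+1}^\vee\bigr) \;=\; c_1\bigl(\mathcal{O}_{P(V_i \otimes L_i)}(1)\bigr)
\]
is exactly the class playing the role of $u$ in \eqref{eq:segre} with $E = V_i \otimes L_i$. Since $\mathrm{rk}(V_i \otimes L_i) = r+1$, the fibers of $\pi_{i+1}$ have dimension $r$.

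Next I would decompose $f$ as a polynomial in the vertical variable $v_{i+1}$, writing
\[
f(v_1,\dotsc,v_i,v_{i+1}) \;=\; \sum_{j\geq 0} f_j(v_1,\dotsc,v_i)\, v_{i+1}^{\,j},
\]
where each coefficient $f_j$ is pulled back from $\widebar{X}_i$. Applying \eqref{eq:segre} term by term with $\alpha = f_j$ and using that $\pi_{i+1}$-pushforward of $v_{i+1}^{\,j}$ vanishes for $j<r$ and equals $s_{j-r}(V_i\otimes L_i)$ for $j\geq r$, I get
\[
\int_{\widebar{X}_{i+1}} f(v_1,\dotsc,v_i,v_{i+1}) \;=\; \sum_{k\geq 0}\int_{\widebar{X}_i} f_{r+k}(v_1,\dotsc,v_i)\, s_k\bigl(V_i\otimes L_i\bigr).
\]

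Finally I would interpret this sum as a coefficient extraction. Expanding
\[
s_{1/t_{i+1}}\bigl(V_i\otimes L_i\bigr) \;=\; \sum_{k\geq 0} \frac{s_k(V_i\otimes L_i)}{t_{i+1}^{\,k}},
\qquad
f(v_1,\dotsc,v_i,t_{i+1}) \;=\; \sum_{j\geq 0} f_j\, t_{i+1}^{\,j},
\]
the Cauchy product contributes to the monomial $t_{i+1}^{\,r}$ precisely through pairs $(j,k)$ with $j-k = r$, yielding exactly $\sum_{k\geq 0} f_{r+k}\, s_k(V_i\otimes L_i)$, which matches the expression above after integrating over $\widebar{X}_i$.

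There is no real obstacle: the only point that needs a careful word is the identification $\widebar{X}_{i+1} = P(V_i\otimes L_i)$ with its tautological class $v_{i+1}$, which lets us invoke \eqref{eq:segre} on the twisted bundle rather than on $V_i$ itself; the remainder is a bookkeeping check that the Segre expansion of $s_{1/t_{i+1}}(V_i\otimes L_i)$ reproduces, through coefficient extraction, exactly the push-forward formula on $\widebar{X}_{i+1}$.
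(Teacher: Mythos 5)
Your proposal is correct and follows essentially the same route as the paper's own proof: identify \(\widebar{X}_{i+1}=P(V_i)\simeq P(V_i\otimes L_i)\) so that \(v_{i+1}\) is the tautological class of the twisted bundle, decompose \(f\) into monomials \(f_j\,v_{i+1}^{\,j}\), apply the Segre-class push-forward formula \eqref{eq:segre} termwise, and repackage the resulting sum \(\sum_k f_{r+k}\,s_k(V_i\otimes L_i)\) as the coefficient of \(t_{i+1}^{r}\) in the Cauchy product with \(s_{1/t_{i+1}}(V_i\otimes L_i)\). The only cosmetic difference is that the paper also records the (finite) interchange of \(\int_{\widebar{X}_i}\) with the coefficient extraction, which your bookkeeping implicitly performs.
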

\begin{proof}
  Firstly, by the Leray-Hirsch theorem:
  \[
    H^{\smbullet}
    \bigr(\widebar{X}_0\bigl)
    [v_1,\dotsc,v_i,v_{i+1}]
    =
    H^{\smbullet}
    \bigr(\widebar{X}_{i+1}\bigl)
  \]
  Thus, \(f\) has values in the cohomology ring of \(\widebar{X}_{i+1}\).

  The polynomial \(f\) is of the form: 
  \[
    f(v_1,\dotsc,v_i,v_{i+1})
    =
    \sum_{j=0}^{n_{i+1}}
    (v_{i+1})^j\,
    (\pi_{i+1})^{\star}f_{j}(v_1,\dotsc,v_i).
  \]
  By linearity, the formula will hold for any such sum, if it holds for every monomial:
  \[
    v_{i+1}^j\,
    (\pi_{i+1})^{\star}f_{j}(v_1,\dotsc,v_i).
  \]

  Recall that the line bundles \(L_i\) are constructed by the inductive formula:
  \[
    L_{i+1}
    =
    \mathcal{O}_{P(V_i)}(-1)
    \otimes
    \pi_{i+1}^\star L_i
    =
    \mathcal{O}_{P(V_i\otimes L_i)}(-1).
  \]
  Thus, it can be thought of as the tautological line bundle of the projective bundle:
  \[
    P(V_i\otimes L_i)
    \simeq
    P(V_i)
    \defby
    \widebar{X}_{i+1}.
  \]
  Then the above fiber integration formula \eqref{eq:segre} yields at once:
  \[
    \tag{$*$}
    \label{eq:fiber}
    \int_{\widebar{X}_{i+1}}
    v_{i+1}^j\,
    (\pi_{i+1})^{\star}f_{j}(v_1,\dotsc,v_i)
    =
    \int_{\widebar{X}_i} 
    s_{j-r}
    \bigl(V_i\otimes L_i\bigr)\;
    f_{j}(v_1,\dotsc,v_i).
  \]
  In particular, this integral is zero for indices \(j\) smaller than \(r\).

  The problem is now to obtain the individual Segre classes from the total Segre class.
  In that aim, we will use the formalism of generating functions. 
  Recall that, in analogy with Chern polynomial, for a vector bundle \(E\to X\) over a \(N\) dimensional manifold \(X\), we have introduced the generating function \(s_u(E)\) of the Segre classes of \(E\), that is:
  \[
    s_u(E)
    \bydef
    s_0(E)+u\,s_1(E)+u^2\,s_2(E)+\dotsb+u^N\,s_N(E).
  \]
  Then, by taking \(t=1/u\), we obtain a Laurent polynomial:
  \[
    s_{1/t}(E)
    \bydef
    \frac{s_0(E)}{t^0}+\frac{s_1(E)}{t^1}+\frac{s_2(E)}{t^2}+\dotsb+\frac{s_N(E)}{t^N},
  \]
  in which the \(\bigl(j-r\bigr)\)-th Segre class involved in the fiber integration appears as the coefficient:
  \[
    s_{j-r}(E)
    =
    \bigl[1/t^{j-r}\bigr]
    s_{1/t}(E)
    =
    \bigl[t^{r}\bigr]
    \Bigl(t^j\,s_{1/t}(E)\Bigr).
  \]

  Therefore, by replacing in the integration formula \eqref{eq:fiber}:
    \begin{align*}
      \int_{\widebar{X}_{i+1}}
      v_{i+1}^j\,
      (\pi_{i+1})^{\star}f_{j}(v_1,\dotsc,v_i)
      &=
      \int_{\widebar{X}_i} 
      \bigl[{t_{i+1}}^{r}\bigr]
      \Bigl(
      t_{i+1}^j\;
      s_{1/t_{i+1}}\bigl(V_i\otimes L_i\bigr)
      \Bigr)\,
      f_{j}(v_1,\dotsc,v_i).
      \\
      &=
      \bigl[{t_{i+1}}^{r}\bigr]
      \left(
      \int_{\widebar{X}_i}
      t_{i+1}^j\;
      f_j(v_i,\dotsc,v_1)\;
      s_{1/t_{i+1}}\bigl(V_i\otimes L_i\bigr)
      \right).
    \end{align*}

  Notice that inside of the parenthesis there is the product of a monomial by a Laurent polynomial. Thus, only a finite number of terms are involved and there is no objection to switching the integral and the coefficient extraction.

  The obtained formula is exactly the sought formula for the considered monomial:
  \[
    v_{i+1}^j\,
    (\pi_{i+1})^{\star}f_{j}(v_1,\dotsc,v_i),
  \]
  and this ends the proof.
\end{proof}

\subsection*{Iteration of the integration formula}
In order to iterate the fiber integration, we introduce the following formalism: for \(i=0,1,\dotsc,\kappa\), we denote by \(\underline{vt}_{\,i}\) the \(\kappa\)-tuple obtained from:
\[
  \underline{v}
  \bydef
  (v_1,\dotsc,v_{\kappa})
\]
by replacing the last \(i\) components \(v_{\kappa-i+1}\), \dots, \(v_{\kappa}\) by the corresponding parameters \(t_{\kappa-i+1}\), \dots, \(t_{\kappa}\), \textit{i.e.}:
\[
  \underline{vt}_{\,i}
  \bydef
  (v_1,\dotsc,v_{\kappa-i},t_{\kappa-i+1},\dotsc,t_\kappa)
  \qquad
  {\scriptstyle (i=0,1,\dotsc,\kappa)}.
\]
With this notation the fiber integration formula \eqref{eq:fiberint} just above yields directly, that
for any polynomial in the first Chern classes \(v_1,\dotsc,v_{i+1}\) having coefficients in (the pullback of) the cohomology of \(\widebar{X}_0\), being a Laurent polynomial in the formal parameters \(t_{i+2},\dotsc,t_{\kappa}\):
\[
  f
  \in 
  H^{\smbullet}
  \bigr(\widebar{X}_0\bigl)
  [v_1,\dotsc,v_i,v_{i+1}]
  [t_{i+2},t_{i+2}^{\moinsun},\dotsc,t_{\kappa},t_{\kappa}^{\moinsun}],
\]
the following formula of integration along the fibers of \(\widebar{X}_{i+1}\to\widebar{X}_i\) holds:
\begin{equation}
  \label{eq:fibInt}
  \int_{\widebar{X}_{i}}
  f\bigl(\underline{vt}_{\,\kappa-i}\bigr)
  =
  \bigl[t_{i}^{r}\bigr]
  \int_{\widebar{X}_{i-1}}
  f\bigl(\underline{vt}_{\,\kappa-i+1}\bigr)\;
  s_{1/t_{i}}
  \bigl(V_{i-1}\otimes L_{i-1}\bigr).
\end{equation}

Notice that in the above formula the form of the polynomial appearing in the integrand:
\[
  f\bigl(\underline{vt}_{\,\kappa-i+1}\bigr)\;
  s_{1/t_{i}}
  \bigl(V_{i-1}\otimes L_{i-1}\bigr)
  \in 
  H^{\smbullet}
  \bigr(\widebar{X}_0\bigl)
  [v_1,\dotsc,v_i]
  [t_{i+1},t_{i+1}^{\moinsun},\dotsc,t_{\kappa},t_{\kappa}^{\moinsun}],
\]
allows to iterate this formula in order to integrate along the fibers of \(\widebar{X}_{i-1}\to \widebar{X}_{i-2}\).
For short, we denote the appearing polynomial rings by:
\[
  \Lambda[\underline{vt}_{\kappa-i}]
  \bydef
  H^{\smbullet}
  \bigr(\widebar{X}_0\bigl)
  [v_1,\dotsc,v_i]
  [t_{i+1},t_{i+1}^{\moinsun},\dotsc,t_{\kappa},t_{\kappa}^{\moinsun}]
  \quad{\scriptstyle(i=0,1,\dotsc,\kappa)}.
\]
One has thus:
\[
  \Lambda[\underline{vt}_0]=
  H^{\smbullet}\bigl(\widebar{X}_{0}\bigr)[\underline{v}]=
  H^{\smbullet}\bigl(\widebar{X}_{\kappa}\bigr)
  \quad\text{and}\quad
  \Lambda[\underline{vt}_\kappa]=
  H^{\smbullet}\bigl(\widebar{X}_{0}\bigr)[\underline{t},\underline{t}^{\moinsun}].
\]

We have first to investigate the dependence with respect to \(v_i\) of the appearing power series \(s_{1/t_{i+1}}\bigl(V_i\otimes L_i\bigr) \).
The induction formula \eqref{eq:segre} precisely provides us with this information.
Thanks to it, we can split the power series \(s_{1/t}\bigl(V_i\otimes L_i\bigr)\) in two parts:
\[
  s_{1/t_{j}}\bigl(V_{i}\otimes L_{i}\bigr)
  =
  \underbrace{
    \varphi
    \bigg(\frac{v_{i}}{t_{j}},\frac{v_{i-1}}{t_{j}}\bigg)
  }
  _{\in\Lambda[\underline{vt}_{\kappa-i}]}
  \;
  \underbrace{
    \vphantom{\bigg(}
    s_{1/t_{j}}\bigl(V_{i-1}\otimes L_{i-1}\bigr)
  }
  _{\in\Lambda[\underline{vt}_{\kappa-i+1}]},
\]
or for \(i=1\):
\[
  s_{1/t_{j}}\bigl(V_{1}\otimes L_{1}\bigr)
  =
  \underbrace{
    \varphi
    \bigg(\frac{v_{1}}{t_{j}},0\bigg)
  }
  _{\in\Lambda[\underline{vt}_{\kappa-1}]}
  \;
  \underbrace{
    \vphantom{\bigg(}
    s_{1/t_{j}}\bigl(V_{0}\otimes L_{0}\bigr)
  }
  _{\in H^{\smbullet}(\widebar{X}_{0})[\underline{t},\underline{t}^{\moinsun}]},
\]
the first of which depends on \(v_i\) whereas the second does not.

Write for short:
\[
  \varPhi_{k,l}(t_1,\dotsc,t_\kappa)
  \bydef
  \varphi
  \bigg(\frac{t_k}{t_l},\frac{t_{k-1}}{t_l}\biggr)
  \qquad
  {\scriptstyle (k=2,\dotsc,\kappa-1,\ k+1\leq l\leq\kappa}),
\]
and:
\[
  \varPhi_{1,l}(t_1,\dotsc,t_\kappa)
  \bydef
  \varphi
  \bigg(\frac{t_1}{t_{l}},0\biggr)
  \qquad
  {\scriptstyle (2\leq l\leq\kappa)},
\]
in such way that, for any two positive integers \(k<l\):
\begin{equation}
  \label{eq:phikl}
  s_{1/t_l}\bigl(V_{k}\otimes L_{k}\bigr)
  =
  \varPhi_{k,l}\bigl(\underline{vt}_{k}\bigr)\,
  s_{1/t_l}\bigl(V_{k-1}\otimes L_{k-1}\bigr)
  \quad
  {\scriptstyle(1\leq k<l\leq\kappa)}.
\end{equation}

Let \(\varPhi_i\) be the product of the terms in the \(i\) last lines of the array:
\[
  \xym[0]{6,6}{
        \ar@{.}[5,5]1\ar@{.}[5,0]&\varPhi_{1,2}\ar@{.}[0,4]&&&&\varPhi_{1,\kappa}\\
        &&\varPhi_{2,3}\ar@{.}[dddrrr]\ar@{.}[rrr]&&&\varPhi_{2,\kappa}\ar@{.}[ddd]\\
        &&&&&\\
        &&&&&\\
        &&& &&\varPhi_{\kappa-1,\kappa}\\
        1\ar@{.}[0,5]&&&&&\hskip10pt 1\hskip10pt
      },
\]
that is the product of \(\bigl(i(i-1)/2\bigr)\) terms:
\[
  \varPhi_i(t_1,\dotsc,t_\kappa)
  \bydef
  \prod_{\kappa-i+1\leq k<l\leq\kappa}
  \varPhi_{k,l}(t_1,\dotsc,t_\kappa).
\]
As an example, 
\(
  \varPhi_1
  (t_1,\dotsc,t_\kappa)
  =
  1.
\)

The following lemma will be used in order to isolate the variable \(v_{\kappa-i}\), that is to eliminate after the \(i\)-th step of the fiber integration. 
\begin{Lem}
  [Isolation of \(v_{\kappa-i}\)]
  \label{lem:phi}
  For any \(i=0,1,\dotsc,\kappa-1\) one has the following relation between \(\varPhi_i\) and \(\varPhi_{i+1}\):
  \[
      \varPhi_{i}\bigl(\underline{vt}_{\,i}\bigr)
      \!\!\!
      \prod_{j=\kappa-i+1}^{\kappa}
      \!\!\!
      s_{1/t_j}\bigl(V_{\kappa-i}\otimes L_{\kappa-i}\bigr)
      =
      \varPhi_{i+1}\bigl(\underline{vt}_{\,i}\bigr)
      \!\!\!
      \prod_{j=\kappa-i+1}^{\kappa}
      \!\!\!
      s_{1/t_j}\bigl(V_{\kappa-(i+1)}\otimes L_{\kappa-(i+1)}\bigr).
  \]
\end{Lem}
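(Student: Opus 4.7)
The plan is to peel off the row $k=\kappa-i$ of the Segre recursion~\eqref{eq:phikl} from each of the $i$ Segre factors on the left-hand side, and then to observe that the factors thus produced are exactly the ones needed to upgrade $\varPhi_i$ to $\varPhi_{i+1}$.

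Concretely, for each $j=\kappa-i+1,\dotsc,\kappa$, I would apply~\eqref{eq:phikl} with $k=\kappa-i$ and $l=j$ (admissible since $k<l$), obtaining
\[
s_{1/t_j}\bigl(V_{\kappa-i}\otimes L_{\kappa-i}\bigr)
=
\varPhi_{\kappa-i,j}\bigl(\underline{vt}_{\,i}\bigr)\,
s_{1/t_j}\bigl(V_{\kappa-(i+1)}\otimes L_{\kappa-(i+1)}\bigr),
\]
noting that $\varPhi_{\kappa-i,j}(t_1,\dotsc,t_\kappa)=\varphi(t_{\kappa-i}/t_j,\,t_{\kappa-i-1}/t_j)$ depends only on the three slots $\kappa-i$, $\kappa-i-1$ and $j$, which on the tuple $\underline{vt}_{\,i}$ carry precisely the values $v_{\kappa-i}$, $v_{\kappa-i-1}$ and $t_j$. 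Substituting each such identity into the product on the left-hand side of the lemma and collecting, the left-hand side rewrites as
\[
\left[\,\varPhi_i\bigl(\underline{vt}_{\,i}\bigr)\;\prod_{j=\kappa-i+1}^{\kappa}\varPhi_{\kappa-i,j}\bigl(\underline{vt}_{\,i}\bigr)\,\right]
\prod_{j=\kappa-i+1}^{\kappa}
s_{1/t_j}\bigl(V_{\kappa-(i+1)}\otimes L_{\kappa-(i+1)}\bigr).
\]

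To finish, I would invoke the defining product formula for the $\varPhi$'s: the set of pairs indexing $\varPhi_{i+1}$, namely $\{(k,l):\kappa-i\leq k<l\leq\kappa\}$, differs from that of $\varPhi_i$, namely $\{(k,l):\kappa-i+1\leq k<l\leq\kappa\}$, by adjoining exactly the row $k=\kappa-i$, $l=\kappa-i+1,\dotsc,\kappa$. Consequently
\[
\varPhi_{i+1}(t_1,\dotsc,t_\kappa)
=
\varPhi_i(t_1,\dotsc,t_\kappa)\;
\prod_{j=\kappa-i+1}^{\kappa}\varPhi_{\kappa-i,j}(t_1,\dotsc,t_\kappa),
\]
so the bracketed quantity above is precisely $\varPhi_{i+1}\bigl(\underline{vt}_{\,i}\bigr)$, matching the right-hand side of the lemma.

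I anticipate no substantial obstacle: once the above rewriting is unrolled, the proof is pure index bookkeeping. The one point to watch is the consistent substitution into the $\underline{vt}$-notation, ensuring that each $\varPhi_{\kappa-i,j}$ receives $v_{\kappa-i}$ and $v_{\kappa-i-1}$ at slots $\kappa-i$ and $\kappa-i-1$ respectively, and $t_j$ at slot $j$; this is the verification made in the first substitution step.
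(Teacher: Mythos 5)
Your proof is correct and takes essentially the same route as the paper's: both peel off the row \(k=\kappa-i\) of the recursion \eqref{eq:phikl} from each Segre factor and identify the resulting product \(\prod_{j=\kappa-i+1}^{\kappa}\varPhi_{\kappa-i,j}\) with the quotient \(\varPhi_{i+1}/\varPhi_{i}\) via the triangular index sets. The only cosmetic point is that for \(i=\kappa-1\) the factor \(\varPhi_{1,j}\) is \(\varphi(t_1/t_j,0)\) rather than \(\varphi(t_1/t_j,t_0/t_j)\), which does not affect the argument.
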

\begin{proof}
  Recall the induction formula displayed above:
  \[
    \frac
    {s_{1/t_j}\bigl(V_i\otimes L_i\bigr)}
    {s_{1/t_j}\bigl(V_{i-1}\otimes L_{i-1}\bigr)}
    =
    \varPhi_{i,j}
    \bigl(\underline{vt}_{\,\kappa-i}\bigr).
  \]
  Thus, one has:
  \[
    \frac
    {\prod_{j=\kappa-i+1}^{\kappa}
    s_{1/t_j}\bigl(V_{\kappa-i}\otimes L_{\kappa-i}\bigr)}
    {\prod_{j=\kappa-i+1}^{\kappa}
    s_{1/t_j}\bigl(V_{\kappa-(i+1)}\otimes L_{\kappa-(i+1)}\bigr)}
    =
    \prod_{j=\kappa-i+1}^{\kappa}
    \varPhi_{\kappa-i,j}\bigl(\underline{vt}_{\,\kappa-i}\bigr).
  \]
  Now, by definition of \(\varPhi_i\) and \(\varPhi_{i+1}\):
  \[
    \frac
    {\varPhi_{i+1}}
    {\varPhi_{i}}
    \bigl(\underline{vt}_{\,\kappa-i}\bigr)
    =
    \frac
    {\prod_{\kappa-i\leq k<l\leq\kappa}
    \varPhi_{k,l}}
    {\prod_{\kappa-i+1\leq k<l\leq\kappa}
    \varPhi_{k,l}}
    \bigl(\underline{vt}_{\,\kappa-i}\bigr)
    =
    \prod_{l=\kappa-i+1}^{\kappa}
    \varPhi_{\kappa-i,l}
    \bigl(\underline{vt}_{\,\kappa-i}\bigr).
  \]
  Hence, we get the announced result.
\end{proof}
Notice that in the right hand side of the obtained formula, only the first factor depends on \(v_{\kappa-i}\).

This result is given by anticipation of the proof of main theorem \eqref{thm:fibInt1}. However we can already notice that, \textit{e.g.}:
\[
  \varPhi_{1}\bigl(\underline{vt}_{\,1}\bigr)
  \prod_{j=\kappa-1+1}^{\kappa}
  s_{1/t_j}\bigl(V_{\kappa-i}\otimes L_{\kappa-i}\bigr)
  =
  s_{1/t_\kappa}\bigl(V_{\kappa-1}\otimes L_{\kappa-1}\bigr),
\]
is the term appearing in the first step of the fiber integration.

\begin{Thm}[Fiber Integration on the Demailly tower]
  \label{thm:fibInt1}
  Any polynomial:
  \[
    f\in H^{\bullet}(\widebar{X}_0,V_0)[t_1,\dotsc,t_{\kappa}],
  \]
  in \(\kappa\) variables \(t_1,\dotsc,t_{\kappa}\), with coefficients in the cohomology ring \(H^{\bullet}(\widebar{X}_0,V_0)\), yields a cohomology class:
  \[
    f\bigl(\underline{v}\bigr)
    =
    f\bigl(v_1,\dotsc,v_\kappa\bigr)
    \in
    H^\smbullet\bigl(\widebar{X}_\kappa\bigr),
  \]
  that can be integrated along the fibers of the projective bundle \(\widebar{X}_\kappa\to \widebar{X}_0\) according to the formula:
  \[
    \int_{\widebar{X}_{\kappa}}
    f\bigl(\underline{v}\bigr)
    =
    \bigl[
      t_1^{r}\dotsm t_{\kappa}^{r}
    \bigr]
    \biggl(
    \varPhi_{\kappa}\bigl(\underline{t}\bigr)
    \int_{\widebar{X}_0}
    f\bigl(\underline{t}\bigr)\;
    s_{1/t_1}(V_0)
    \dotsm
    s_{1/t_{\kappa}}(V_0)
    \biggr).
  \]
\end{Thm}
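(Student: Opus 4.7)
The plan is to prove the formula by induction on the number of fiber integrations, peeling off one level of the Demailly tower at a time starting from $\widebar{X}_\kappa$, each time applying Proposition \ref{eq:fiberint} and then repackaging the resulting integrand via Lemma \ref{lem:phi}. The natural inductive statement, for $i = 0, 1, \dotsc, \kappa$, is
\[
\int_{\widebar{X}_\kappa} f(\underline{v}) \;=\; \bigl[t_{\kappa-i+1}^{r}\dotsm t_{\kappa}^{r}\bigr]\!\left(\varPhi_{i}(\underline{vt}_{\,i})\int_{\widebar{X}_{\kappa-i}} f(\underline{vt}_{\,i})\,\prod_{j=\kappa-i+1}^{\kappa} s_{1/t_{j}}\bigl(V_{\kappa-i}\otimes L_{\kappa-i}\bigr)\right).
\]
The case $i=0$ is trivial since $\varPhi_0 = 1$ and the empty product is $1$; the case $i=\kappa$ is exactly the statement of the theorem, once one recalls that with the conventions of \S2 one has $L_0$ trivial, so $s_{1/t_j}(V_0\otimes L_0) = s_{1/t_j}(V_0)$, and that $\underline{vt}_{\,\kappa} = \underline{t}$.

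First I would verify the inductive step. Starting from the formula at stage $i$, I would apply Proposition \ref{eq:fiberint} to the inner integral, integrating along the fibers of $\widebar{X}_{\kappa-i}\to\widebar{X}_{\kappa-i-1}$, which is licit because the integrand is polynomial in $v_{\kappa-i}$ with coefficients that do not involve this variable apart from the factor $\varPhi_{i}(\underline{vt}_{\,i})$; it produces a new coefficient extraction $[t_{\kappa-i}^{r}]$ and a new Segre factor $s_{1/t_{\kappa-i}}(V_{\kappa-i-1}\otimes L_{\kappa-i-1})$, as well as replacing $v_{\kappa-i}$ by $t_{\kappa-i}$ throughout the remaining integrand. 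At this point, the product $\varPhi_i\cdot\prod_{j=\kappa-i+1}^{\kappa} s_{1/t_j}(V_{\kappa-i}\otimes L_{\kappa-i})$ has been pulled back to $\widebar{X}_{\kappa-i-1}$, and Lemma \ref{lem:phi} rewrites it exactly as $\varPhi_{i+1}\cdot\prod_{j=\kappa-i+1}^{\kappa} s_{1/t_j}(V_{\kappa-i-1}\otimes L_{\kappa-i-1})$. Including the newly introduced Segre factor $s_{1/t_{\kappa-i}}(V_{\kappa-i-1}\otimes L_{\kappa-i-1})$ extends the product to start at $j = \kappa-(i+1)+1$, which delivers precisely the formula at stage $i+1$.

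Secondly I would check that, on specialization $i=\kappa$, the accumulated factor $\varPhi_\kappa(\underline{t})$ defined as
\[
\varPhi_\kappa(\underline{t}) = \prod_{1\leq k<l\leq\kappa}\varPhi_{k,l}(\underline{t}) = \prod_{2\leq l\leq\kappa}\frac{t_l - t_1}{t_l - 2t_1}\cdot\prod_{2\leq k<l\leq\kappa}\frac{t_l - t_k}{t_l - 2t_k + t_{k-1}}
\]
coincides, as an iterated Laurent series, with the closed form $\Phi_\kappa$ of the statement; this is a short algebraic manipulation using $\varphi(x,y) = (1-x)/(1-2x+y)$ and the trivial factorization $\prod_{1\leq i<j}\frac{t_j - t_i}{t_j - 2t_i}\cdot\prod_{2\leq i<j}\frac{t_j - 2t_i}{t_j - 2t_i + t_{i-1}}$.

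The main subtlety, and the place where I expect to have to be careful, is the interchange between cohomological truncation and formal power series. The function $\varphi(x,y)$ appearing in \eqref{eq:ll'segre} is only the truncation of $(1-x)/(1-2x+y)$ up to degree $n_{\kappa-1}$, which is what makes sense in cohomology; yet the final formula expresses the answer as a coefficient extraction on an iterated Laurent series of a genuine rational function. One has to argue that replacing truncated $\varphi$ by its untruncated rational version is harmless for the coefficient being extracted, because all cohomology classes being nilpotent of bounded degree, only finitely many terms of each series ever contribute nontrivially to the finite-dimensional coefficient $[t_1^{r}\dotsm t_\kappa^{r}]$. Once this is laid out carefully, the induction closes and the theorem follows.
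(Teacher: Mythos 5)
Your induction is the same as the paper's: the identical inductive statement (the paper just packages the integrand as \(f_i(\underline{vt}_{\,i})\)), the same two ingredients (Proposition \ref{eq:fiberint} and Lemma \ref{lem:phi}), and the same endgame at \(i=\kappa\) via \(L_0=\mathcal{O}_{\widebar{X}_0}\). However, inside the inductive step you apply the two ingredients in the wrong order, and the order matters. You first push forward along \(\widebar{X}_{\kappa-i}\to\widebar{X}_{\kappa-i-1}\) and only afterwards invoke Lemma \ref{lem:phi}, asserting that the factor \(\prod_{j=\kappa-i+1}^{\kappa}s_{1/t_j}\bigl(V_{\kappa-i}\otimes L_{\kappa-i}\bigr)\) ``has been pulled back to \(\widebar{X}_{\kappa-i-1}\)''. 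It has not: it is a class on \(\widebar{X}_{\kappa-i}\) that genuinely depends on \(v_{\kappa-i}\) (removing that dependence is precisely the purpose of Lemma \ref{lem:phi}), so the integrand at stage \(i\) is \emph{not} visibly of the form required by Proposition \ref{eq:fiberint} --- a polynomial in \(v_{\kappa-i}\) whose coefficients come from the lower levels --- and the substitution \(v_{\kappa-i}\mapsto t_{\kappa-i}\) is not defined on a class that has not been written as such a polynomial. The paper performs the isolation first: Lemma \ref{lem:phi} rewrites \(\varPhi_{i}(\underline{vt}_{\,i})\prod_j s_{1/t_j}(V_{\kappa-i}\otimes L_{\kappa-i})\) as \(\varPhi_{i+1}(\underline{vt}_{\,i})\prod_j s_{1/t_j}(V_{\kappa-(i+1)}\otimes L_{\kappa-(i+1)})\), where the Segre product now lies in \(\Lambda[\underline{vt}_{i+1}]\) and all the \(v_{\kappa-i}\)-dependence is carried by the explicit Laurent polynomial \(f\cdot\varPhi_{i+1}\); only then is formula \eqref{eq:fibInt} applicable. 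Swapping these two steps repairs your argument.

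Your last two paragraphs are aimed at the wrong statement. Theorem \ref{thm:fibInt1} is stated with the slanted \(\varPhi_{\kappa}(\underline{t})=\prod_{1\leq k<l\leq\kappa}\varPhi_{k,l}(\underline{t})\), a finite product of the \emph{truncated} expansions \(\varphi\), hence a Laurent polynomial; no identification with the rational function \(\Phi_{\kappa}\) is required, and the induction closes with no discussion of truncation. That identification is the content of the separate Theorem \ref{thm:fibInt2}, and there the paper does not argue by nilpotence of cohomology classes --- by that stage everything has already been integrated over \(\widebar{X}_0\) and is a scalar Laurent polynomial --- but by a support count: the remainders \(R_{i,j}\) live in degrees \(<-n_{\kappa-1}\) in \(t_j\), while \(I(t_1,\dotsc,t_{\kappa-j})\) has degree at most \(n_{\kappa}\) there, so the remainder terms cannot reach the coefficient of \(t_{\kappa-j}^{r}\). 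Your ``only finitely many terms contribute'' heuristic would not by itself show that the discarded terms contribute \emph{zero}.
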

\begin{proof}
  We will prove by induction that for \(i=0,1,\dotsc,\kappa\), one has:
  \[
    \int_{\widebar{X}_{\kappa}}
    f\bigl(\underline{v}\bigr)
    =
    \bigl[
      t_{\kappa-i+1}^{r}\dotsm t_{\kappa}^{r}
    \bigr]
    \int_{\widebar{X}_{\kappa-i}}
    f_i\bigl(\underline{vt}_{\,i}\bigr),
  \]
  with:
  \[
    f_i\bigl(\underline{vt}_{\,i}\bigr)
    \bydef
    f\bigl(\underline{vt}_{\,i}\bigr)\,
    \varPhi_i\bigl(\underline{vt}_{\,i}\bigr)\,
    \prod_{k=\kappa-i+1}^{\kappa}
    s_{1/t_{k}}(V_{\kappa-i}\otimes L_{\kappa-i}).
  \]
  Then, for \(i=\kappa\):
  \[
    \int_{\widebar{X}_{\kappa}}
    f\bigl(\underline{v}\bigr)
    =
    \bigl[
      t_{1}^{r}\dotsm t_{\kappa}^{r}
    \bigr]
    \int_{\widebar{X}_{0}}
    f_\kappa\bigl(\underline{t}\bigr),
  \]
  with:
  \[
    f_\kappa\bigl(\underline{t}\bigr)
    \bydef
    f\bigl(\underline{t}\bigr)\,
    \varPhi_\kappa\bigl(\underline{t}\bigr)\,
    \prod_{k=1}^{\kappa}
    s_{1/t_{k}}(V_{0}\otimes L_{0}).
  \]
  That is the desired formula because \(L_0=\mathcal{O}_{\widebar{X}_0}\).

  For \(i=0\), this is tautological. Now, assume that the formula holds for the index \(i\), that is to say:
  \[
    \tag{\(*\)}
    \int_{\widebar{X}_{\kappa}}
    f\bigl(\underline{v}\bigr)
    =
    \bigl[
      t_{\kappa-i+1}^{r}\dotsm t_{\kappa}^{r}
    \bigr]
    \int_{\widebar{X}_{\kappa-i}}
    f_i\bigl(\underline{vt}_{\,i}\bigr),
  \]

  According to lemma \eqref{lem:phi}, \(f_i\) can also be written:
  \[
    f_i\bigl(\underline{vt}_{\,i}\bigr)
    =
    f\bigl(\underline{vt}_{\,i}\bigr)\,
    \varPhi_{i+1}\bigl(\underline{vt}_{\,i}\bigr)\,
    \underbrace
    {
      \prod_{k=\kappa-i+1}^{\kappa}
      s_{1/t_{k}}(V_{\kappa-(i+1)}\otimes L_{\kappa-(i+1)}).
    }_{\in\Lambda[\underline{vt}_{i+1}]}
  \]
  Now applying lemma \eqref{eq:fibInt}:
  \[
    \int_{\widebar{X}_{\kappa-i}}
    f_i\bigl(\underline{vt}_{\,i}\bigr)
    =
    \bigl[t_{\kappa-i}^{r}\bigr]
    \int_{\widebar{X}_{\kappa-i-1}}
    f_i\bigl(\underline{vt}_{\,i+1}\bigr)
    s_{1/t_{\kappa-i}}(V_{\kappa-(i+1)}\otimes L_{\kappa-(i+1)}).
  \]
  It remains to state that:
  \begin{multline*}
    f_i\bigl(\underline{vt}_{\,i+1}\bigr)
    s_{1/t_{\kappa-i}}(V_{\kappa-(i+1)}\otimes L_{\kappa-(i+1)})
    =
    \\
    f\bigl(\underline{vt}_{\,i+1}\bigr)\,
    \varPhi_{i+1}\bigl(\underline{vt}_{\,i+1}\bigr)\,
    \prod_{k=\kappa-i+1}^{\kappa}
    s_{1/t_k}(V_{\kappa-(i+1)}\otimes L_{\kappa-(i+1)})
    \\
    s_{1/t_{\kappa-i}}(V_{\kappa-(i+1)}\otimes L_{\kappa-(i+1)}).
  \end{multline*}
  Here, we recognize the expression:
  \[
    f_{i+1}\bigl(\underline{vt}_{\,i+1}\bigr)\,
    =
    f
    \bigl(\underline{vt}_{\,i+1}\bigr)\,
    \varPhi_{i+1}\bigl(\underline{vt}_{\,i+1}\bigr)\,
    \prod_{k=\kappa-i}^{\kappa}
    s_{1/t_k}(V_{\kappa-(i+1)}\otimes L_{\kappa-(i+1)}).
  \]
  Thus, we can replace the integrand in order to get:
  \[
    \int_{\widebar{X}_{\kappa-i}}
    f_i\bigl(\underline{vt}_{\,i}\bigr)
    =
    \bigl[t_{\kappa-i}^{r}\bigr]
    \int_{\widebar{X}_{\kappa-i-1}}
    f_{i+1}\bigl(\underline{vt}_{\,i+1}\bigr).
  \]
  Using the induction hypothesis \((*)\), one finally gets the desired formula, for the index \(i+1\):
  \[
    \int_{\widebar{X}_{\kappa}}
    f\bigl(\underline{v}\bigr)
    =
    \bigl[
      t_{\kappa-i+1}^{r}\dotsm t_{\kappa}^{r}
    \bigr]
    \int_{\widebar{X}_{\kappa-i}}
    f_i\bigl(\underline{vt}_{\,i}\bigr)
    =
    \bigl[
      t_{\kappa-i}^{r}\dotsm t_{\kappa}^{r}
    \bigr]
    \int_{\widebar{X}_{\kappa-(i+1)}}
    f_{i+1}\bigl(\underline{vt}_{\,i+1}\bigr).
  \]
  This complete the proof.
\end{proof}

\subsection*{Laurent series expansion of rational functions}
Before going further, we give now more details about the Laurent series expansion mentioned above in the introduction (page \pageref{par:multivariate formal series}). 

In the univariate case, we will denote by \(K(\!(t)\!)\) the space of Laurent series. Equipped with the Cauchy product, it becomes a field. Indeed, the following \textsl{geometric series formula} is valid in the ring of formal power series:
\[
  (1-X)^{\moinsun}
  =
  \sum_{i\geq0}X^i,
\]
and it allows to define the formal inverse (for the Cauchy product) of any Laurent series of the form:
\[
  \varPsi
  =
  \sum_{i\geq N}\varPsi_i\,t^i
\]
with initial coefficient \(\varPsi_N\neq0\), as follows:
\begin{equation}
  \label{eq:formal_inverse}
  \varPsi^{\moinsun}
  =
  \frac{1}{\varPsi_{N}\,t^N}\;
  \sum_{k\geq0}\biggl(-\sum_{j\geq1}\frac{\varPsi_{N+j}}{\varPsi_{N}}\,t^j\biggr)^k,
\end{equation}
because the computation of the coefficient of any power of \(t\) in the later expression involves only a finite number of appearing \(k\)-th powers. The result is indeed a Laurent series, since its support is visibly bounded from below. 

A direct consequence is that any rational function enjoys a natural Laurent expansion.
Indeed, the support of a polynomial \(Q\in K[t]\), considered as a formal series, is finite. Thus, it is naturally a Laurent series. Then, by formula \eqref{eq:formal_inverse} above, we can construct a formal inverse of \(Q\) in the field of Laurent series. Now, any rational function of the form:
\[
  \frac{P(t)}{Q(t)}
  =
  P(t)\,Q^{\moinsun}(t),
\]
with also \(P\in R[t]\), can be expanded  as a Laurent series: it suffices to use the multiplication rule \eqref{eq:CauchyProduct} in order to compute the product (in the field of Laurent series) of  the numerator \(P\) by the formal inverse "\(Q^{\moinsun}\)" obtained after using the expansion rule \eqref{eq:formal_inverse}. 
This yields an injective morphism of fields:
\[
  \varPsi^0
  \colon
  K(t)
  \hookrightarrow
  K(\!(t)\!),
\]
that we call \textsl{Laurent expansion of rational functions at the origin}. 

\medskip

In the multivariate case, in order to \emph{unequivocally} expand a rational function of several variables \(t_1,\dotsc,t_\kappa\) under the form of a generalized Laurent series, it is necessary to assign at first a total ordering to the variables \(t_i\) (consider the example of \((t_1-t_2)^{\moinsun}\)). 
Then, working step by step in the univariate setting (taking account of the ordering of the variables),
one easily convinces oneself that the successive series expansions at zero  yield an injective morphism of fields:
\[
  \varPsi^0
  \colon
  K(t_{\kappa})(t_{\kappa-1})\dotso(t_1)
  \hookrightarrow
  K(\!(t_{\kappa})\!)(\!(t_{\kappa-1})\!)\dotso(\!(t_1)\!),
\]
that we call \textsl{Laurent expansion of rational functions at the origin, under the assumption \(t_1\ll\dotsb\ll t_{n-1}\ll t_n\ll 1\)}. The map \(\varPsi^0\) is indeed injective because its image contains only summable series, therefore its left inverse is the successive summation for \(t_{\kappa},t_{\kappa-1}\dotsc,t_{1}\). 

Here, the notation \(t_1\ll t_2\ll\dotsb\ll t_\kappa\ll1\) express the idea that for two integers \(k<k'\) the variable \(t_{k}\) is infinitely smaller than any (positive or negative) power of the variable \(t_{k'}\). 
To compute the iterated Laurent series expansion of a rational function \(Q\in K(t_{1},\dotsc,t_{\kappa})\), we first expand \(Q\) at the origin as a rational function of \(t_1\), formally considering any rational expression made of constants of \(K\), and variables \(t_2,\dotsc,t_{\kappa}\) as elements of the field of coefficients. Then, when expanding the coefficients of the resulting series, we forget \(t_1\) and we have: \(t_2\ll t_3\ll \dotso t_\kappa \ll 1\). We iterate the procedure until we get \(t_{\kappa}\ll1\), that is the one-dimensional case. 

Thus, an element of the ring:
\[
K\llangle t_1,\dotsc,t_{\kappa}\rrangle
\bydef
  K(\!(t_{\kappa})\!)(\!(t_{\kappa-1})\!)\dotso(\!(t_1)\!),
\]  
should be seen as a Laurent series in \(t_1\) whose coefficients are Laurent series in \(t_2\) whose coefficients are Laurent series in \(t_3\) and so on\ldots  Accordingly such an element is called an \textsl{iterated Laurent series}.

It is a bigger space than the space of multivariate Laurent series. A formal series \(\varPsi\) is an element of \(K\llangle t_1,\dotsc,t_\kappa\rrangle\) if and only if its support is well ordered for the lexicographic order. 
This condition is clearly weaker than to be bounded from below for the standard product order on \(\Z^{\kappa}\) (consider again the example of \((t_1-t_2)^{\moinsun}\)).

\medskip

We can extend the coefficient extraction operator to the field of rational functions \(K(t_1,\dotsc,t_\kappa)\) by using the injection \(\varPsi^0\). 
For a rational function \(Q\in K(t_1,\dotsc,t_\kappa)\), we always imply the assumption \(t_1\ll\dotsb\ll t_{\kappa}\ll1\) and we define the coefficient extraction operator:
\[
  \bigl[t_{1}^{i_{1}}\dotsm t_{\kappa}^{i_{\kappa}}\bigr]
  \bigl(Q\bigr)
  \bydef
  \bigl[t_{1}^{i_{1}}\dotsm t_{\kappa}^{i_{\kappa}}\bigr]
  \bigl(\varPsi^0(Q)\bigr).
\]
This convention in turn allows us to define the (Cauchy) product of a rational function by an iterated Laurent series, by using the same formalism as in \eqref{eq:CauchyProduct}.

\subparagraph*{Integration formula}
We are now in position to state a more tractable version of formula \eqref{thm:fibInt1}:
\begin{Thm}[Fiber Integration on the Demailly tower]
  \label{thm:fibInt2}
  For any polynomial:
  \[
    f\in H^{\bullet}(\widebar{X}_0,V_0)[t_1,\dotsc,t_{\kappa}],
  \]
  in \(\kappa\) variables \(t_1,\dotsc,t_{\kappa}\), with coefficients in the cohomology ring \(H^{\bullet}(\widebar{X}_0,V_0)\), having total degree at most \(n_\kappa\), 
  the cohomology class:
  \[
    f\bigl(\underline{v}\bigr)
    =
    f\bigl(v_1,\dotsc,v_\kappa\bigr)
    \in
    H^\smbullet\bigl(\widebar{X}_\kappa\bigr),
  \]
  can be integrated along the fibers of the projective bundle \(\widebar{X}_\kappa\to \widebar{X}_0\) according to the formula:
  \[
    \int_{\widebar{X}_{\kappa}}
    f\bigl(\underline{v}\bigr)
    =
    \bigl[
      t_1^{r}\dotsm t_{\kappa}^{r}
    \bigr]
    \biggl(
    \Phi_{\kappa}\bigl(\underline{t}\bigr)
    \int_{\widebar{X}_0}
    f\bigl(\underline{t}\bigr)\,
    s_{1/t_{1}}(V_0)
    \dotsm
    s_{1/t_{\kappa}}(V_0)
    \biggr),
  \]
  where \(\Phi_\kappa\) is the universal rational function:
  \[
    \Phi_{\kappa}
    (t_1,\dotsc,t_\kappa)
    =
    \prod_{1\leq i<j\leq\kappa}
    \frac{t_j-t_i}{t_j-2\,t_i}
    \;
    \prod_{2\leq i<j\leq\kappa}
    \frac{t_j-2\,t_i}{t_j-2\,t_i+t_{i-1}}.
  \]
\end{Thm}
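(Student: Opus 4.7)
The plan is to deduce this refined statement from Theorem \ref{thm:fibInt1} in two conceptually independent steps: first, an elementary algebraic identity that realizes \(\Phi_\kappa\) as the closed rational form of the factored product appearing in \(\varPhi_\kappa\); second, a degree/truncation argument that shows the replacement of the truncated polynomial \(\varphi\) by the full rational function \((1-x)/(1-2x+y)\) does not alter the extracted coefficient.

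I first observe that the auxiliary polynomial
\[
  \varphi(x,y)
  =
  (1-x)\sum_{k=0}^{n_{\kappa-1}}(2x-y)^k
\]
is precisely the degree-\(n_{\kappa-1}\) truncation of the rational function \(\widetilde{\varphi}(x,y)\bydef(1-x)/(1-2x+y)\), expanded as a geometric series in the single variable \((2x-y)\). Consequently, each factor \(\varPhi_{k,l}(\underline{t})=\varphi(t_k/t_l,t_{k-1}/t_l)\) (with the convention that \(t_{k-1}\) is replaced by \(0\) when \(k=1\)) is the finite truncation of the iterated Laurent expansion of
\[
  \widetilde{\varPhi}_{k,l}(\underline{t})
  \bydef
  \frac{t_l-t_k}{t_l-2\,t_k+t_{k-1}},
\]
under the ordering \(t_1\ll t_2\ll\dotsb\ll t_\kappa\ll1\) that governs the field of iterated Laurent series.

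Next, I would show that when integrated against the Laurent polynomial
\[
  J(\underline{t})
  \bydef
  \int_{\widebar{X}_0}f(\underline{t})\,s_{1/t_1}(V_0)\dotsm s_{1/t_\kappa}(V_0)
\]
and evaluated by the operator \([t_1^r\dotsm t_\kappa^r]\), the tail \(\widetilde{\varphi}-\varphi\) contributes nothing. The hypothesis \(\deg f\leq n_\kappa\) together with \(\dim\widebar{X}_0<\infty\) bounds the \(t_l\)-degrees of \(J(\underline{t})\) from above, while the tail \(\widetilde{\varphi}(t_k/t_l,t_{k-1}/t_l)-\varphi(t_k/t_l,t_{k-1}/t_l)\) consists solely of monomials carrying \(t_l\)-powers more negative than \(-n_{\kappa-1}\). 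A careful bookkeeping of these bounds, propagated through the full product indexed by pairs \(k<l\), shows that no contribution from the tails can combine with \(J(\underline{t})\) to produce the targeted monomial \(t_1^r\dotsm t_\kappa^r\). This is the precise compatibility between the nilpotence \(v_i^{n_i+1}=0\) that forced the truncation in Theorem \ref{thm:fibInt1} and the larger rational expansion allowed in iterated Laurent series. Hence \([t_1^r\dotsm t_\kappa^r]\bigl(\varPhi_\kappa\,J\bigr)=[t_1^r\dotsm t_\kappa^r]\bigl(\widetilde{\varPhi}_\kappa\,J\bigr)\), where \(\widetilde{\varPhi}_\kappa=\prod_{k<l}\widetilde{\varPhi}_{k,l}\).

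Finally, it remains to verify the purely algebraic identity
\[
  \widetilde{\varPhi}_\kappa(\underline{t})
  =
  \prod_{l=2}^{\kappa}\frac{t_l-t_1}{t_l-2\,t_1}
  \prod_{2\leq k<l\leq\kappa}\frac{t_l-t_k}{t_l-2\,t_k+t_{k-1}}
  =
  \Phi_\kappa(\underline{t}).
\]
For each factor with \(k\geq2\), multiplying numerator and denominator by \((t_l-2\,t_k)\) splits it as
\[
  \frac{t_l-t_k}{t_l-2\,t_k}\cdot\frac{t_l-2\,t_k}{t_l-2\,t_k+t_{k-1}}.
\]
Regrouping the first quotients \((t_j-t_i)/(t_j-2\,t_i)\) with the left-hand \(k=1\) product assembles \(\prod_{1\leq i<j\leq\kappa}(t_j-t_i)/(t_j-2\,t_i)\), while the second quotients assemble into \(\prod_{2\leq i<j\leq\kappa}(t_j-2\,t_i)/(t_j-2\,t_i+t_{i-1})\), producing exactly \(\Phi_\kappa\). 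The principal obstacle is the middle step: rigorously controlling the tails of the geometric expansions against the bounded bidegree of \(J(\underline{t})\) simultaneously for all pairs \(k<l\). Once this compatibility is settled, Theorem \ref{thm:fibInt1} delivers the formula with \(\Phi_\kappa\) in place of \(\varPhi_\kappa\).
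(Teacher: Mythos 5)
Your overall strategy coincides with the paper's: write each factor of $\Phi_\kappa$ as $\widetilde{\varPhi}_{k,l}=(t_l-t_k)/(t_l-2t_k+t_{k-1})$, recognize $\varPhi_{k,l}$ as its truncation at order $n_{\kappa-1}$ in the iterated Laurent expansion, show the tails do not affect the coefficient of $t_1^r\dotsm t_\kappa^r$, and check the elementary regrouping identity (which you verify correctly, and which is exactly the paper's opening ``reshaping''). But the step you defer as ``careful bookkeeping, propagated through the full product'' is the entire substance of the proof, and the version of it you sketch would fail. You argue that each tail $R_{k,l}=\widetilde{\varPhi}_{k,l}-\varPhi_{k,l}$ carries only $t_l$-exponents below $-n_{\kappa-1}$ while $J(\underline{t})$ has bounded $t_l$-degree, so no term containing a tail can reach $t_l^r$. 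This ignores that the \emph{other} factors in the product are not neutral in $t_l$: the factors $\varPhi_{l,l'}$ and $\widetilde{\varPhi}_{l,l'}$ with $l'>l$ contribute \emph{positive} powers of $t_l$ (up to $n_{\kappa-1}$ for each truncated factor, and unboundedly many for each tail $R_{l,l'}$, which is an infinite series in $t_l/t_{l'}$ and $t_{l-1}/t_{l'}$). So a single uniform degree count in $t_l$ over the whole product does not close.

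What is actually needed --- and what the paper supplies --- is a two-part argument. First, the tails must be removed \emph{column by column, starting from the highest variable} $t_\kappa$ and descending: since $R_{k,l}$ involves no variable $t_m$ with $m>l$, it commutes with the extraction of $[t_{l+1}^r\dotsm t_\kappa^r]$, and after that extraction the offending positive powers of $t_l$ coming from the factors indexed by $l'>l$ have been absorbed into the partially integrated quantity $I(t_1,\dotsc,t_l)=[t_{l+1}^r\dotsm t_\kappa^r]\bigl(I(\underline{t})\prod_{k<l'\leq\kappa-1,\,l'>l}\varPhi_{k,l'}\bigr)$. Second, one needs the nontrivial estimate $\deg_{t_l}I(t_1,\dotsc,t_l)\leq n_l\leq n_\kappa$, which the paper proves by a separate degree count (homogenizing each $\varPhi_{k,l'}$ to a genuine polynomial of degree $n_{\kappa-1}$ and tracking how much degree the coefficient extraction removes); only then does $\deg_{t_l}\bigl(R_{k,l}\cdot I(t_1,\dotsc,t_l)\bigr)<-n_{\kappa-1}+n_\kappa=r$ follow, killing the tail's contribution to $[t_l^r]$. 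Without the ordering of the elimination and without this lemma on the intermediate degrees, your middle step remains a genuine gap rather than a routine verification.
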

\begin{proof}
  The product \(\Phi_{\kappa}\) can be reshaped as follows:
  \[
    \Phi_{\kappa}
    (t_1,\dotsc,t_\kappa)
    =
    \prod_{j=2}^{\kappa-1}
    \underbrace{
      \frac{t_j-t_1}{t_j-2\,t_1}
    }_{\Phi_{1,j}(\underline{t})}
    \;
    \prod_{i=2}^{j-1}
    \underbrace{
      \frac{t_j-t_i}{t_j-2\,t_i+t_{i-1}}
    }_{\Phi_{i,j}(\underline{t})}.
  \]
  For \(1\leq i<j\leq\kappa-1\), let:
  \[
    R_{i,j}(\underline{t})
    \bydef
    \varPsi^0(\Phi_{i,j}(\underline{t}))
    -
    \varPhi_{i,j}(\underline{t}).
  \]
  By coming back to the definitions, it is immediate to see that these remainders are:
  \[
    R_{1,j}(\underline{t})
    =
    \left(1-\frac{t_i}{t_j}\right)
    \sum_{k>n_{\kappa-1}}\frac{(2t_i)^k}{t_j^k}
  \]
  and:
  \[
    R_{i,j}(\underline{t})
    =
    \left(1-\frac{t_i}{t_j}\right)
    \sum_{k>n_{\kappa-1}}\frac{(2t_i-t_{i-1})^k}{t_j^k}
    \quad{\scriptstyle(2\leq i<j\leq\kappa-1)}.
  \]
  Noteworthy, the supports of these remainders satisfy:
  \[
    \mathrm{supp}\,R_{i,j}
    \subset
    \bigl\{
      (i_1,\dotsc,i_\kappa)
      \colon
      i_j<-n_{\kappa-1}
    \bigr\}
    \cap
    \bigcap_{k>j}
    \bigl\{
      (i_1,\dotsc,i_\kappa)
      \colon
      i_k=0
    \bigr\}.
  \]
  One can write:
  \[
      \varPsi^0(\Phi(\underline{t}))
      =
      \prod_{\text{entries}}
      \xym{4,4}{
            1\ar@{.}"m4,4"!UL+<20pt,-5pt>\ar@{.}[3,0]&\varPhi_{1,2}+R_{1,2}\ar@{.}[2,2]\ar@{.}[0,2]&&\varPhi_{1,\kappa}+R_{1,\kappa}\ar@{.}[2,0]\\
            &&&&\\
            &&&\varPhi_{\scriptscriptstyle\kappa-1,\kappa}+R_{\scriptscriptstyle\kappa-1,\kappa}\\
            \hskip10pt 1\hskip10pt\ar@{.}[]!R-<10pt,0pt>;"m4,4"!L+<20pt,0pt>&&&\hskip22pt 1\hskip22pt
          }.
  \]
  We clean inductively this array of the remainder in the \(k\)-th column.
  Let \(\mathsf{array}_j\) be the array deduced from the above array by dropping the remainders in the \(j\) last columns.
  We will show that:
  \[
    \bigl[
      t_1^{r}\dotsm t_{\kappa}^{r}
    \bigr]
    \Bigl(
    \prod\mathsf{array}_j\,
    I(t_1,\dotsc,t_\kappa)
    \Bigr)
    =
    \bigl[
      t_1^{r}\dotsm t_{\kappa}^{r}
    \bigr]
    \Bigl(
    \prod\mathsf{array}_{j+1}\,
    I(t_1,\dotsc,t_\kappa)
    \Bigr)
  \]
  where:
  \[
    I(t_1,\dotsc,t_\kappa)
    \bydef
    \int_{\widebar{X}_0}
    f(t_1,\dotsc,t_\kappa)\,
    s_{1/t_1}(V_0)\dotsm s_{1/t_{\kappa}}(V_0).
  \]

  We generalize the notation \(I(t_{1},\dotsc,t_{\kappa})\) by setting:
  \[
    I(t_1,\dotsc,t_i)
    \bydef
    \bigl[t_{i+1}^r\dotsm t_{\kappa}^r]
    \bigg(
    I(\underline{t})\,
    \prod_{k=i+1}^{\kappa-1}
    \prod_{j=1}^{k-1}
    \varPhi_{j,k}(\underline{t})
    \bigg)
    \qquad{\scriptstyle(i=0,1,\dotsc,\kappa)}.
  \]
  Then we claim (find the proof below):
  \[
    \mathrm{supp}\,I(t_1,\dotsc,t_{\kappa-j})
    \subset
    \bigl\{
      (i_1,\dotsc,i_{\kappa-j})
      \colon
      i_{\kappa-j}\leq n_{\kappa}\,
    \bigr\}.
  \]

  On the other hand, one is easily convinced that:
  \[
    \prod\mathsf{array}_j
    =
    \prod\mathsf{array}_{j+1}
    +
    R_j\;
    \prod_{l=\kappa-j+1}^{\kappa-1}
    \prod_{k=1}^{l-1}
    \varPhi_{k,l}(\underline{t}),
  \]
  where \(R_{j}\) is a given series such that:
  \[
    \mathrm{supp}\,R_j(t_1,\dotsc,t_{\kappa-j})
    \subset
    \bigl\{
      (i_1,\dotsc,i_{\kappa-j})
      \colon
      i_{\kappa-j}<-n_{\kappa-1}
    \bigr\}.
  \]

  It is now clear that the second part cannot contribute to the coefficient of \(t_{\kappa-j}^r\) in:
  \[
    [t_{\kappa-j+1}^r\dotsm t_{\kappa}^r]
    \Bigl(\prod\mathsf{array}_j\,I(\underline{t})\Bigr),
  \]
  because the degree of \(t_{\kappa-j}\) in the corresponding term: 
  \[
    [t_{\kappa-j+1}^r\dotsm t_{\kappa}^r]
    \bigg(
    R_j(t_1,\dotsc,t_{\kappa-j})\;
    \prod_{l=\kappa-j+1}^{\kappa-1}
    \prod_{k=1}^{l-1}
    \varPhi_{k,l}(\underline{t})\,
    I(\underline{t})
    \bigg)
    =
    R_j(t_1,\dotsc,t_{\kappa-j})\;
    I(t_1,\dotsc,t_{\kappa-j})
  \]
  is \emph{strictly} less than
  \(
    -n_{\kappa-1}
    +
    n_\kappa
    =
    r
  \).

  Consequently:
  \[
    \bigl[
      t_{\kappa-j}^{r}\dotsm t_{\kappa}^{r}
    \bigr]
    \Bigl(
    \prod\mathsf{array}_j\,
    I(t_1,\dotsc,t_\kappa)
    \Bigr)
    =
    \bigl[
      t_{\kappa-j}^{r}\dotsm t_{\kappa}^{r}
    \bigr]
    \Bigl(
    \prod\mathsf{array}_{j+1}\,
    I(t_1,\dotsc,t_\kappa)
    \Bigr),
  \]
  and by extraction of the coefficient of the monomial \(t_1^r\dotsm t_{\kappa-j-1}^r\), as announced:
  \[
    \bigl[
      t_{1}^{r}\dotsm t_{\kappa}^{r}
    \bigr]
    \Bigl(
    \prod\mathsf{array}_j\,
    I(t_1,\dotsc,t_\kappa)
    \Bigr)
    =
    \bigl[
      t_{1}^{r}\dotsm t_{\kappa}^{r}
    \bigr]
    \Bigl(
    \prod\mathsf{array}_{j+1}\,
    I(t_1,\dotsc,t_\kappa)
    \Bigr).
  \]

  An induction finishes the proof because:
  \[
    \varPsi^0(\Phi_{\kappa})
    =
    \prod\mathsf{array}_{0}
    \quad
    \text{and}
    \quad
    \varPhi_{\kappa}
    =
    \prod\mathsf{array}_{\kappa}.
    \qedhere
  \]
\end{proof}
We have added a lot of non contributive terms, however in practice (\cite{arXiv:1402.1396}), the above reformulation of \eqref{thm:fibInt1} is more efficient, because it takes account of the convergence of the series at stake.

Finally, we prove our claim above in the proof, that:
  \[
    \mathrm{supp}\,I(t_1,\dotsc,t_{\kappa-j})
    \subset
    \bigl\{
      (i_1,\dotsc,i_{\kappa-j})
      \colon
      i_{\kappa-j}\leq n_{\kappa}\,
    \bigr\}.
  \]
  Actually we will be more precise and show that for \(j=1,\dotsc,\kappa\):
  \[
    \mathrm{supp}\,I(t_1,\dotsc,t_j)
    \subset
    \bigl\{
      (i_1,\dotsc,i_j)
      \colon
      i_j\leq n_{j}\,
    \bigr\}.
  \]
\begin{proof}
  In order to prove this statement, it is easier to work with genuine polynomials, and not Laurent polynomials.
  An important remark is that for any two integers \(k<l\), the Laurent polynomial \(t_{l}^{n_{\kappa-1}}\varPhi_{k,l}(\underline{t})\) is in fact a genuine polynomial, having degree \(n_{\kappa-1}\).
  Thus, we rather consider:
  \[
    t_j^n
    I(t_1,\dotsc,t_j)
    =
    \bigl[t_{j+1}^{m_{j+1}}\dotsm t_{\kappa-1}^{m_{\kappa-1}}\,t_{\kappa}^{n+r}]
    \bigg(
    \bigl(
    t_j^n\dotsm t_{\kappa}^n\,
    I(\underline{t})\,
    \bigr)
    \prod_{l=j+1}^{\kappa-1}
    \prod_{k=1}^{l-1}
    \bigl(
    t_{l}^{n_{\kappa-1}}
    \varPhi_{k,l}(\underline{t})
    \bigr)
    \bigg),
  \]
  where
  \(
    m_l
    =
    n+r+(l-1)(n_{\kappa-1})
  \).

  The appearing polynomial in \(t_j,t_{j+1},\dotsc,t_\kappa\) has degree:
  \[
    \deg_{t_j,\dotsc,t_{\kappa}}
    \bigg(
    \bigl(
    t_j^n\dotsm t_{\kappa}^n\,
    I(\underline{t})\,
    \bigr)
    \prod_{l=j+1}^{\kappa-1}
    \prod_{k=1}^{l-1}
    \bigl(
    t_{l}^{n_{\kappa-1}}
    \varPhi_{k,l}(\underline{t})
    \bigr)
    \bigg)
    \leq
    n_{\kappa}
    +
    n
    +
    \sum_{l=j+1}^{\kappa-1}
    (m_l-r)
    +
    n.
  \]
  Extracting the coefficient of \(\bigl[t_{j+1}^{m_{j+1}}\dotsm t_{\kappa-1}^{m_{\kappa-1}}\,t_{\kappa}^{n+r}]\) decrease the degree by at least:
  \[
    \sum_{l=j+1}^{\kappa-1}
    m_l
    +(n+r).
  \]
  Finally we get a polynomial in \(t_j\) having degree:
  \[
    \deg_{t_j}
    \left(
    t_j^n
    I(t_1,\dotsc,t_j)
    \right)
    \leq
    n
    +
    n_{\kappa}
    -
    r
    -
    \sum_{l=j+1}^{\kappa-1}
    r
    =
    n+n_j.
  \]
  Thus, as announced:
  \[
    \deg_{t_j}
    \bigl(
    I(t_1,\dotsc,t_j)
    \bigr)
    \leq
    n_{j}.
    \qedhere
  \]
\end{proof}

Notice that our theorem holds as well without the (natural) technical assumption on the degree of \(f\).

%
%
\bibliographystyle{amsplain}
\bibliography{$LATEX/these.bib}
\vfill
\end{document}